\newtheorem{theorem}{Theorem}
\newtheorem{lemma}[theorem]{Lemma}
\newenvironment{problem}{\pb\rm}{\endpb}
\newenvironment{remark}{\rem\rm}{\endrem}
\newcounter{unnumber}
\newenvironment{proof}{\prf\rm}{\hfill{$\blacksquare$}\endprf}
\newcommand{\R}{\mathbb{R}}%
\newcommand{\N}{\mathbb{N}}%
\newcommand{\ol}{\overline}%
\newcommand{\fx}{\ensuremath{\boldsymbol{x}}}
\newcommand{\fp}{\ensuremath{\boldsymbol{p}}}
\newcommand{\fM}{\ensuremath{\boldsymbol{M}}}
\newcommand{\fQ}{\ensuremath{\boldsymbol{Q}}}
\newcommand{\fK}{\ensuremath{\boldsymbol{\mathcal{K}}}}
\DeclareMathOperator*\inte{int}%
\DeclareMathOperator*\sqri{sqri}%
\DeclareMathOperator*\ri{ri}%
\DeclareMathOperator*\dom{dom}%
\DeclareMathOperator*\B{\overline{\R}}%
\DeclareMathOperator*\gr{Gr}%
\DeclareMathOperator*\ran{ran}%
\DeclareMathOperator*\id{Id}%
\DeclareMathOperator*\prox{prox}%
\DeclareMathOperator*\zer{zer}
\title{An inertial forward-backward-forward primal-dual splitting algorithm for solving monotone inclusion problems}
\author{Radu Ioan Bo\c{t} \thanks{University of Vienna, Faculty of Mathematics, Oskar-Morgenstern-Platz 1, A-1090 Vienna, Austria,
email: radu.bot@univie.ac.at. Research partially supported by DFG (German Research Foundation), project BO 2516/4-1.} \and
Ern\"{o} Robert Csetnek \thanks {University of Vienna, Faculty of Mathematics, Oskar-Morgenstern-Platz 1, A-1090 Vienna, Austria,
email: ernoe.robert.csetnek@univie.ac.at. Research supported by DFG (German Research Foundation), project BO 2516/4-1.}}
\begin{document}
\maketitle

\noindent \textbf{Abstract.} We introduce and investigate the convergence properties of an inertial forward-backward-forward splitting algorithm for approaching the set of zeros of the
sum of a maximally monotone operator and a single-valued monotone and Lipschitzian operator. By making use of the product space approach, we expand it to the solving of inclusion problems involving mixtures of linearly
composed and parallel-sum type monotone operators. We obtain in this way an inertial forward-backward-forward primal-dual  splitting algorithm having as main characteristic the fact that in the iterative scheme all operators are
accessed separately either via forward or via backward evaluations.  We present also the variational case when one is interested in the solving of a primal-dual pair of convex optimization problems with
intricate objective functions.
\vspace{1ex}

\noindent \textbf{Key Words.} maximally monotone operator, resolvent, subdifferential, convex optimization,
inertial splitting algorithm, primal-dual algorithm \vspace{1ex}

\noindent \textbf{AMS subject classification.} 47H05, 65K05, 90C25

\section{Introduction and preliminaries}\label{sec-intr}

Due to its wide applicability in different branches of the applied mathematics, especially in connection with real-life problems, the problem of solving inclusion problems
involving mixtures of monotone operators in Hilbert spaces continues to attract the interest of many researchers (see \cite{bauschke-book, b-c-h1, b-c-h1, b-h, b-h2, br-combettes, combettes, combettes-pesquet, vu}).

In this paper we will focus on the class of so-called \textit{inertial proximal methods}, the origins of which go back to \cite{alvarez2000, alvarez-attouch2001}.
The idea behind the iterative scheme relies on the use of an implicit discretization of a differential system of second-order in time and it was employed for the first time in  the context of finding the zeros of a
maximally monotone operator in \cite{alvarez-attouch2001}. One  of the main features of the inertial proximal algorithm is that the next iterate is defined by making use of the last two iterates. It also turns out that
the method is a generalization of the classical proximal-point one (see \cite{rock-prox}). Since its introduction, one can notice an increasing interest in the class of inertial type algorithms, see \cite{alvarez2000, alvarez-attouch2001, alvarez-attouch2001,
cabot-frankel2011, mainge2008, mainge-moudafi2008, moudafi-oliny2003}. Especially noticeable is that these ideas where also used in the context of determining the zeros of the sum of a maximally monotone operator and a
(single-valued) cocoercive operator, giving rise to the so-called inertial forward-backward algorithm \cite{moudafi-oliny2003}. This is an extension of the classical \textit{forward-backward algorithm}
(see \cite{bauschke-book, combettes}) and assumes the evaluation of the set-valued operator via its resolvent, called backward step, while the single-valued operator is evaluated via a forward step.

The first major aim of this manuscript to introduce and investigate an inertial forward-backward-forward splitting algorithm for finding
the zeros of the sum of a maximally monotone operator and a monotone and Lipschitzian operator. The proposed scheme represents an extension of Tseng's \textit{forward-backward-forward-type algorithm},
(see \cite{bauschke-book, br-combettes, tseng, Tse91}), however, for the study of its convergence properties we will use some generalizations of the
Fej\'{e}r monotonicity techniques provided in \cite{alvarez-attouch2001}. An essential argument in the favor of forward-backward-forward splitting algorithms is given by the fact that they can be used when solving
a larger class of monotone inclusion problems, since it is known that there exist monotone and Lipschitzian  operators which are not cocoercive, in which case the forward-backward algorithms cannot be applied
(see \cite{br-combettes, combettes-pesquet, b-c-h1}). This is for instance the case when considering primal-dual splitting methods, as one can notice by consulting \cite{br-combettes, combettes-pesquet, b-c-h1}.

\textit{Primal-dual splitting algorithms} are modern techniques designed to solve inclusion problems where some complex structures
of monotone operators are involved, such as mixtures of linearly
composed and parallel-sum type monotone operators. The key feature of these algorithms is that
they are fully decomposable, in the sense that each of the operators  are evaluated in the algorithm separately, either via forward or via backward steps.
It is also noticeable that the primal-dual algorithms solve concomitantly a (primal) monotone inclusion problem and its dual monotone inclusion problem in the sense of
Attouch-Th\'{e}ra \cite{at-th}. We invite the reader to consult  \cite{b-c-h1, b-c-h2, b-h, b-h2, br-combettes, combettes-pesquet, vu, ch-pck, condat2013}  for further considerations concerning this class of algorithms.
The second major aim of this paper will be to formulate an inertial primal-dual splitting algorithm relying on the inertial forward-backward-forward one.

The structure of the paper is the following. The remainder of this section is dedicated
to some elements of the theory of maximal monotone operators and to the recall of some convergence results. In the next section we formulate the inertial forward-backward-forward splitting algorithm for finding
the zeros of the sum of a maximally monotone operator and a monotone and Lipschitzian operator and investigate its convergence. In Section \ref{sec3} we use the product space approach in order to obtain the inertial primal-dual splitting algorithm designed for solving monotone inclusion problems involving 
mixtures of linearly composed and parallel-sum type monotone operators. Finally, we show how the proposed iterative schemes can be used in order to solve primal-dual pairs of convex optimization problems.

For the notions and results presented as follows we refer the reader to \cite{bo-van, b-hab, bauschke-book, EkTem, simons, Zal-carte}. Let $\N= \{0,1,2,...\}$ be the set of nonnegative integers.
Let ${\cal H}$ be a real Hilbert space with \textit{inner product} $\langle\cdot,\cdot\rangle$ and associated \textit{norm} $\|\cdot\|=\sqrt{\langle \cdot,\cdot\rangle}$.
The symbols $\rightharpoonup$ and $\rightarrow$ denote weak and strong convergence, respectively.
When ${\cal G}$ is another Hilbert space and $K:{\cal H} \rightarrow {\cal G}$ a linear continuous operator,
then the \textit{norm} of $K$ is defined as $\|K\| = \sup\{\|Kx\|: x \in {\cal H}, \|x\| \leq 1\}$,
while $K^* : {\cal G} \rightarrow {\cal H}$, defined by $\langle K^*y,x\rangle = \langle y,Kx \rangle$ for all $(x,y) \in {\cal H} \times {\cal G}$, denotes the \textit{adjoint operator} of $K$.

For an arbitrary set-valued operator $A:{\cal H}\rightrightarrows {\cal H}$ we denote by $\gr A=\{(x,u)\in {\cal H}\times {\cal H}:u\in Ax\}$ its \emph{graph}, by $\dom A=\{x \in {\cal H} : Ax \neq \emptyset\}$
its \emph{domain}, by $\ran A=\cup_{x\in{\cal{H}}} Ax$ its {\it range} and by $A^{-1}:{\cal H}\rightrightarrows {\cal H}$ its
\emph{inverse operator}, defined by $(u,x)\in\gr A^{-1}$ if and only if $(x,u)\in\gr A$.
We use also the notation $\zer A=\{x\in{\cal{H}}:0\in Ax\}$ for the \emph{set of zeros} of $A$. We say that $A$ is \emph{monotone}
if $\langle x-y,u-v\rangle\geq 0$ for all $(x,u),(y,v)\in\gr A$. A monotone operator $A$ is said to be \emph{maximally monotone}, if there exists no proper monotone extension of the graph of $A$ on ${\cal H}\times {\cal H}$.
The \emph{resolvent} of $A$, $J_A:{\cal H} \rightrightarrows {\cal H}$, is defined by $J_A=(\id_{{\cal H}}+A)^{-1}$, where $\id_{{\cal H}} :{\cal H} \rightarrow {\cal H}, \id_{\cal H}(x) = x$ for all $x \in {\cal H}$, is the \textit{identity operator} on ${\cal H}$. Moreover, if $A$ is maximally monotone, then $J_A:{\cal H} \rightarrow {\cal H}$ is single-valued and maximally monotone
(see \cite[Proposition 23.7 and Corollary 23.10]{bauschke-book}). For an arbitrary $\gamma>0$ we have (see \cite[Proposition 23.2]{bauschke-book})
$$p\in J_{\gamma A}x \ \mbox{if and only if} \ (p,\gamma^{-1}(x-p))\in\gr A$$
and (see \cite[Proposition 23.18]{bauschke-book})
\begin{equation}\label{j-inv-op}
J_{\gamma A}+\gamma J_{\gamma^{-1}A^{-1}}\circ \gamma^{-1}\id\nolimits_{{\cal H}}=\id\nolimits_{{\cal H}}.
\end{equation}

Further, let us mention some classes of operators that are used in the paper. We say that $A$ is \textit{demiregular} at $x\in\dom A$ if, for every
sequence $(x_n,u_n)_{n\in\N}\in\gr A$ and every $u \in Ax$ such that $x_n\rightharpoonup x$ and $u_n\rightarrow u$, we have
$x_n\rightarrow x$. We refer the reader to \cite[Proposition 2.4]{Att-Ar-Com10} and \cite[Lemma 2.4]{br-combettes} for
conditions ensuring this property. The operator $A$ is said to be \textit{uniformly monotone} at $x\in\dom A$ if there exists an increasing function
$\phi_A : [0,+\infty) \rightarrow [0,+\infty]$ that vanishes only at $0$, and
$\langle x-y,u-v \rangle \geq \phi_A \left( \| x-y \|\right)$ for every $u \in Ax$ and $(y,v) \in \gr A$. If this inequality
holds for all $(x,u),(y,v) \in \gr A$, we say that $A$ is uniformly monotone. If $A$ is uniformly monotone at $x \in \dom A$, then it is demiregular at $x$.

Prominent representatives of the class of uniformly monotone operators are the strongly monotone operators.
Let $\gamma>0$ be arbitrary. We say that  $A$ is \textit{$\gamma$-strongly monotone}, if $\langle x-y,u-v\rangle\geq \gamma\|x-y\|^2$ for all $(x,u),(y,v)\in\gr A$.
Further, a single-valued operator $A:{\cal H}\rightarrow {\cal H}$ is said to be \textit{$\gamma$-cocoercive} if $\langle x-y,Ax-Ay\rangle\geq \gamma\|Ax-Ay\|^2$ for all $(x,y)\in {\cal H}\times {\cal H}$.
Moreover, $A$ is \textit{$\gamma$-Lipschitzian} if $\|Ax-Ay\|\leq \gamma\|x-y\|$ for all $(x,y)\in {\cal H}\times {\cal H}$. A single-valued linear operator $A:{\cal H} \rightarrow {\cal H}$ is said to be \textit{skew}, if $\langle x,Ax\rangle =0$ for all $x \in {\cal H}$. Finally, the \textit{parallel sum} of two operators $A,B:{\cal H}\rightrightarrows {\cal H}$ is defined by $A\Box B: {\cal H}\rightrightarrows {\cal H}, A\Box B=(A^{-1}+B^{-1})^{-1}$.

We close this section by presenting three convergence results which will be crucial for the proof of the main results
in the next section.

\begin{lemma}\label{ext-fejer1} (see \cite{alvarez-attouch2001, alvarez2000, alvarez2004}) Let $(\varphi_n)_{n\in\N}, (\delta_n)_{n\in\N}$ and $(\alpha_n)_{n\in \N}$ be sequences in
$[0,+\infty)$ such that $\varphi_{n+1}\leq\varphi_n+\alpha_n(\varphi_n-\varphi_{n-1})+\delta_n$
for all $n \geq 1$, $\sum_{n\in \N}\delta_n< + \infty$ and there exists a real number $\alpha$ with
$0\leq\alpha_n\leq\alpha<1$ for all $n\in\N$. Then the following hold: \begin{itemize}\item[(i)] $\sum_{n \geq 1}[\varphi_n-\varphi_{n-1}]_+< + \infty$, where
$[t]_+=\max\{t,0\}$; \item[(ii)] there exists $\varphi^*\in[0,+\infty)$ such that $\lim_{n\rightarrow+\infty}\varphi_n=\varphi^*$.\end{itemize}
\end{lemma}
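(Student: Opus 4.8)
The plan is to reduce everything to a one-step recursion for the nonnegative parts of the increments. Set $\theta_n := [\varphi_n-\varphi_{n-1}]_+$ for $n\geq 1$. The first step is to absorb the inertial term: since $0\leq\alpha_n\leq\alpha<1$, one always has $\alpha_n(\varphi_n-\varphi_{n-1})\leq\alpha\theta_n$. Indeed, when $\varphi_n-\varphi_{n-1}\geq 0$ this reads $\alpha_n(\varphi_n-\varphi_{n-1})\leq\alpha(\varphi_n-\varphi_{n-1})=\alpha\theta_n$, while when $\varphi_n-\varphi_{n-1}<0$ the left-hand side is $\leq 0=\alpha\theta_n$. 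Plugging this into the hypothesis gives $\varphi_{n+1}-\varphi_n\leq\alpha\theta_n+\delta_n$, and since the right-hand side is already nonnegative, taking positive parts yields the contraction-type estimate
$$\theta_{n+1}\leq\alpha\theta_n+\delta_n \qquad \text{for all } n\geq 1.$$

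For part (i), I would iterate this inequality to get $\theta_{n+1}\leq\alpha^n\theta_1+\sum_{k=1}^n\alpha^{n-k}\delta_k$ and then sum over $n$. The geometric contribution is controlled by $\theta_1\sum_{n\geq 1}\alpha^n\leq\theta_1/(1-\alpha)$; for the double sum, interchanging the order of summation and using $\sum_{n\geq k}\alpha^{n-k}\leq 1/(1-\alpha)$ gives $\sum_{n}\sum_{k\leq n}\alpha^{n-k}\delta_k\leq(1-\alpha)^{-1}\sum_k\delta_k$. Since $\sum_{n\in\N}\delta_n<+\infty$ by assumption, we conclude $\sum_{n\geq 1}\theta_n<+\infty$, which is exactly (i). (Equivalently, one may sum the recursion directly, $\sum_{n=1}^N\theta_{n+1}\leq\alpha\sum_{n=1}^N\theta_n+\sum_{n=1}^N\delta_n$, and rearrange using $\alpha<1$.)

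For part (ii), the idea is to subtract off the now-summable increments to expose a monotone sequence. Define $w_n:=\varphi_n-\sum_{k=1}^n\theta_k$. Because $\varphi_{n+1}-\varphi_n\leq[\varphi_{n+1}-\varphi_n]_+=\theta_{n+1}$, we get $w_{n+1}-w_n=(\varphi_{n+1}-\varphi_n)-\theta_{n+1}\leq 0$, so $(w_n)_{n\geq 1}$ is non-increasing. It is bounded below: since $\varphi_n\geq 0$ and $\sum_{k=1}^n\theta_k\leq\sum_{k\geq 1}\theta_k<+\infty$ by (i), we have $w_n\geq-\sum_{k\geq 1}\theta_k>-\infty$. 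A non-increasing sequence that is bounded below converges, and as $\sum_{k=1}^n\theta_k$ converges by (i), the identity $\varphi_n=w_n+\sum_{k=1}^n\theta_k$ shows that $(\varphi_n)$ converges to some $\varphi^*$. Finally $\varphi^*\geq 0$ because every $\varphi_n\geq 0$.

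The only delicate point is the very first step, where the passage from the signed increment inequality to the clean recursion on $\theta_{n+1}$ requires carefully combining the uniform bound $\alpha_n\leq\alpha$ with the sign analysis of $\varphi_n-\varphi_{n-1}$. Once the recursion $\theta_{n+1}\leq\alpha\theta_n+\delta_n$ is established, both conclusions follow from routine manipulations of summable series, so I do not expect any genuine obstacle beyond organizing these estimates cleanly.
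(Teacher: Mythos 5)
Your proof is correct: the reduction $\theta_{n+1}\le\alpha\theta_n+\delta_n$ via the sign analysis of $\varphi_n-\varphi_{n-1}$, the summation of the resulting geometric recursion for (i), and the telescoping auxiliary sequence $w_n=\varphi_n-\sum_{k=1}^n\theta_k$ for (ii) are all sound. The paper itself gives no proof of this lemma but defers to the cited references of Alvarez and Attouch, where essentially this exact argument appears, so your proposal coincides with the intended proof.
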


An easy consequence of Lemma \ref{ext-fejer1} is the following result.

\begin{lemma}\label{ext-fejer2} Let $(\varphi_n)_{n\in\N}, (\delta_n)_{n\in\N}, (\alpha_n)_{n\in \N}$ and $(\beta_n)_{n\in \N}$ be sequences in
$[0,+\infty)$ such that $\varphi_{n+1}\leq-\beta_n+\varphi_n+\alpha_n(\varphi_n-\varphi_{n-1})+\delta_n$
for all $n \geq 1$, $\sum_{n\in \N}\delta_n<+\infty$ and there exists a real number $\alpha$ with
$0\leq\alpha_n\leq\alpha<1$ for all $n\in\N$. Then the following hold: \begin{itemize}\item[(i)] $\sum_{n \geq 1}[\varphi_n-\varphi_{n-1}]_+<+\infty$, where
$[t]_+=\max\{t,0\}$; \item[(ii)] there exists $\varphi^*\in[0,+\infty)$ such that $\lim_{n\rightarrow + \infty}\varphi_n=\varphi^*$; \item[(iii)] $\sum_{n\in\N}\beta_n<+\infty$.\end{itemize}
\end{lemma}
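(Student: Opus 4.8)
The plan is to deduce (i) and (ii) immediately from Lemma \ref{ext-fejer1} and then to establish (iii) by rearranging the recurrence and summing. First I would observe that, since $\beta_n\geq 0$ for all $n$, discarding the term $-\beta_n$ only weakens the inequality, so the hypothesis yields
$$\varphi_{n+1}\leq\varphi_n+\alpha_n(\varphi_n-\varphi_{n-1})+\delta_n$$
for all $n\geq 1$. This is exactly the assumption of Lemma \ref{ext-fejer1}, whose remaining hypotheses ($\sum_{n\in\N}\delta_n<+\infty$ and $0\leq\alpha_n\leq\alpha<1$) are carried over verbatim. Consequently (i) and (ii) follow directly, giving both the summability of $\sum_{n\geq 1}[\varphi_n-\varphi_{n-1}]_+$ and the existence of the limit $\varphi^*=\lim_{n\to+\infty}\varphi_n\in[0,+\infty)$.

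For (iii), the key step is to isolate $\beta_n$. Rearranging the recurrence gives
$$\beta_n\leq(\varphi_n-\varphi_{n+1})+\alpha_n(\varphi_n-\varphi_{n-1})+\delta_n$$
for all $n\geq 1$. I would then sum this over $n=1,\dots,N$ and bound each of the three resulting partial sums from above, uniformly in $N$. The first sum telescopes to $\varphi_1-\varphi_{N+1}\leq\varphi_1$, since $\varphi_{N+1}\geq 0$. For the second sum I would use $\alpha_n\leq\alpha$ together with $\varphi_n-\varphi_{n-1}\leq[\varphi_n-\varphi_{n-1}]_+$ to obtain the bound $\alpha\sum_{n\geq 1}[\varphi_n-\varphi_{n-1}]_+$, which is finite by (i). The third sum is bounded by $\sum_{n\in\N}\delta_n<+\infty$. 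Letting $N\to+\infty$ then yields $\sum_{n\in\N}\beta_n<+\infty$.

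There is no genuine obstacle here, which is why the result is described as an easy consequence of Lemma \ref{ext-fejer1}; the only point requiring a little care is the sign handling in the second sum, where the product $\alpha_n(\varphi_n-\varphi_{n-1})$ may be negative and must first be majorized by $\alpha[\varphi_n-\varphi_{n-1}]_+$ before the summability furnished by (i) can be invoked.
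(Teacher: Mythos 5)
Your proof is correct and follows exactly the route the paper intends: the paper offers no written proof, merely noting that the lemma is ``an easy consequence of Lemma \ref{ext-fejer1}'', and your argument --- discard $-\beta_n$ to invoke Lemma \ref{ext-fejer1} for (i) and (ii), then isolate $\beta_n$, telescope, and majorize $\alpha_n(\varphi_n-\varphi_{n-1})$ by $\alpha[\varphi_n-\varphi_{n-1}]_+$ to get (iii) --- is precisely that easy consequence, with the one delicate sign issue handled correctly.
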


Finally, we recall a well known result on weak convergence in Hilbert spaces.

\begin{lemma}\label{opial} (Opial) Let $C$ be a nonempty set of ${\cal H}$ and $(x_n)_{n\in\N}$ be a sequence in ${\cal H}$ such that
the following two conditions hold: \begin{itemize}\item[(a)] for every $x\in C$, $\lim_{n\rightarrow + \infty}\|x_n-x\|$ exists;
\item[(b)] every sequential weak cluster point of $(x_n)_{n\in\N}$ is in $C$;\end{itemize}
Then $(x_n)_{n\in\N}$ converges weakly to a point in $C$.
 \end{lemma}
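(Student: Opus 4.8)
The plan is to combine the two hypotheses through a weak sequential compactness argument, the crux being to rule out the existence of two distinct weak cluster points. First I would record that hypothesis (a) already guarantees boundedness: since $C\neq\emptyset$, fix any $x\in C$; then $\lim_{n\rightarrow+\infty}\|x_n-x\|$ exists and is therefore bounded, so $(x_n)_{n\in\N}$ is bounded. Because ${\cal H}$ is a Hilbert space, hence reflexive, every bounded sequence admits a weakly convergent subsequence, so $(x_n)_{n\in\N}$ has at least one weak cluster point, which by (b) lies in $C$.

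The heart of the argument is to prove uniqueness of the weak cluster point. Suppose $x$ and $y$ are both weak cluster points, say $x_{n_k}\rightharpoonup x$ and $x_{m_j}\rightharpoonup y$; by (b) both belong to $C$, so by (a) both $\lim_{n\rightarrow+\infty}\|x_n-x\|$ and $\lim_{n\rightarrow+\infty}\|x_n-y\|$ exist. I would then expand
\[
\|x_n-x\|^2-\|x_n-y\|^2=2\langle x_n,y-x\rangle+\|x\|^2-\|y\|^2,
\]
from which it follows that the left-hand side, and therefore $\langle x_n,y-x\rangle$, converges as $n\rightarrow+\infty$ to some real number. Evaluating this limit along the subsequence $(x_{n_k})_{k\in\N}$ gives $\langle x,y-x\rangle$, while evaluating it along $(x_{m_j})_{j\in\N}$ gives $\langle y,y-x\rangle$; since the full sequence converges, all subsequential limits coincide, yielding $\langle x-y,y-x\rangle=0$, that is $\|x-y\|^2=0$, and hence $x=y$.

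Finally I would upgrade ``unique weak cluster point'' to ``weak convergence''. A bounded sequence in a reflexive space possessing a single weak cluster point converges weakly to it: otherwise some weak neighbourhood of that cluster point would be avoided by an entire subsequence, which being bounded would admit a further subsequence converging weakly to a necessarily different cluster point, contradicting uniqueness. Thus $(x_n)_{n\in\N}$ converges weakly to its unique cluster point, which lies in $C$.

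I expect the main obstacle to be the uniqueness step, specifically the observation that the simultaneous existence of both limits in (a) is exactly what forces $\langle x_n,y-x\rangle$ to converge; once this is in place the polarization identity collapses immediately to $x=y$, and the passage from a unique cluster point to genuine weak convergence is a routine application of weak sequential compactness.
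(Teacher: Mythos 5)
Your proof is correct. Note that the paper itself gives no proof of this lemma --- it is recalled as a classical result (Opial's lemma) with no argument supplied --- so there is nothing to compare against; your argument is the standard one: boundedness from (a), weak sequential compactness, the polarization identity $\|x_n-x\|^2-\|x_n-y\|^2=2\langle x_n,y-x\rangle+\|x\|^2-\|y\|^2$ to force convergence of $\langle x_n,y-x\rangle$ and hence uniqueness of the weak cluster point, and the routine subsequence argument to upgrade uniqueness to weak convergence. All steps check out.
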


\section{An inertial forward-backward-forward splitting algorithm}\label{sec2}

This section is dedicated to the formulation of an inertial forward-backward-forward splitting algorithm which approaches the set of zeros
of the sum of two maximally monotone operators, one of them being single-valued and Lipschitzian, and to the investigation of its convergence properties.

\begin{theorem}\label{inertial-tseng} Let $A:{\cal H}\rightrightarrows {\cal H}$ be a maximally monotone operator and $B:{\cal H}\rightarrow {\cal H}$ a
monotone and $\beta$-Lipschitzian operator for some $\beta>0$. Suppose that $\zer(A+B)\neq\emptyset$ and consider the following iterative scheme:
$$(\forall n\geq 1)\hspace{0.2cm}\left\{
\begin{array}{ll}
p_n=J_{\lambda_n A}[x_n-\lambda_nBx_n+\alpha_{1,n}(x_n-x_{n-1})]\\
x_{n+1}=p_n+\lambda_n(Bx_n-Bp_n)+\alpha_{2,n}(x_n-x_{n-1}),
\end{array}\right.$$ where $x_0$ and $x_1$ are arbitrarily chosen in ${\cal H}$. Consider $\lambda, \sigma>0$ and
$\alpha_1, \alpha_2\geq 0$  such that
\begin{equation}\label{hypotheses}
12\alpha_2^2+9(\alpha_1+\alpha_2)+4\sigma<1 \ \mbox{and} \ \lambda\leq\lambda_n\leq \frac{1}{\beta}\sqrt{\frac{1-12\alpha_2^2-9(\alpha_1+\alpha_2)-4\sigma}{12\alpha_2^2+8(\alpha_1+\alpha_2)+4\sigma +2}} \ \forall n \geq 1
\end{equation}
and for $i=1,2$ the nondecreasing sequences $(\alpha_{i,n})_{n \geq 1}$ with $\alpha_{i,1}=0$ and $0\leq\alpha_{i,n}\leq\alpha_i$ for all $n \geq 1$.
Then there exists $\ol x\in\zer(A+B)$ such that the following statements are true:
\begin{itemize}\item[(a)] $\sum_{n\in\N}\|x_{n+1}-x_n\|^2<+\infty$
and $\sum_{n \geq 1}\|x_n-p_n\|^2<+\infty$; \item[(b)] $x_n\rightharpoonup\ol x$ and $p_n\rightharpoonup\ol x$ as $n \rightarrow +\infty$; \item[(c)] Suppose that
one of the following conditions is satisfied: \begin{itemize}\item[(i)]$A+B$ is demiregular at $\ol x$; \item[(ii)] $A$ or $B$ is
uniformly monotone at $\ol x$.\end{itemize} Then $x_n\rightarrow\ol x$ and $p_n\rightarrow\ol x$ as $n \rightarrow +\infty$.\end{itemize}

\end{theorem}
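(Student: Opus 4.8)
The plan is to fix an arbitrary $\ol x\in\zer(A+B)$, to set $\varphi_n=\|x_n-\ol x\|^2$, to derive a recursive inequality of the type governed by Lemma \ref{ext-fejer2}, and then to obtain weak convergence via Lemma \ref{opial}. Since $\ol x\in\zer(A+B)$ means $-B\ol x\in A\ol x$, the whole argument rests on comparing the iterates with the pair $(\ol x,-B\ol x)\in\gr A$. I note at the outset that $B$, being monotone, everywhere defined, single-valued and $\beta$-Lipschitzian, is maximally monotone, and that since $\dom B={\cal H}$ the sum $A+B$ is maximally monotone as well; this will be needed to identify the weak cluster points.

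First I would translate the resolvent step into an inclusion. By the characterization $p\in J_{\gamma A}x\Leftrightarrow(p,\gamma^{-1}(x-p))\in\gr A$ recalled above, writing $y_n=x_n+\alpha_{1,n}(x_n-x_{n-1})$ gives $\lambda_n^{-1}(y_n-p_n)-Bx_n\in Ap_n$. Testing the monotonicity of $A$ against $(\ol x,-B\ol x)$ yields $\langle p_n-\ol x,\,y_n-p_n\rangle\ge\lambda_n\langle p_n-\ol x,Bx_n-B\ol x\rangle$. Combining this with the monotonicity of $B$ (to discard $\langle p_n-\ol x,Bp_n-B\ol x\rangle\ge 0$), with the Lipschitz bound $\|Bx_n-Bp_n\|\le\beta\|x_n-p_n\|$, with the identity $\|p_n-\ol x\|^2+2\langle p_n-\ol x,x_n-p_n\rangle=\|x_n-\ol x\|^2-\|x_n-p_n\|^2$, and then expanding $x_{n+1}-\ol x=(p_n-\ol x)+\lambda_n(Bx_n-Bp_n)+\alpha_{2,n}(x_n-x_{n-1})$, I expect to reach
$$\|x_{n+1}-\ol x\|^2\le\|x_n-\ol x\|^2-(1-\lambda_n^2\beta^2)\|x_n-p_n\|^2+2(\alpha_{1,n}+\alpha_{2,n})\langle p_n-\ol x,x_n-x_{n-1}\rangle+\alpha_{2,n}^2\|x_n-x_{n-1}\|^2+2\alpha_{2,n}\lambda_n\langle Bx_n-Bp_n,x_n-x_{n-1}\rangle.$$
The inertial term is then produced through the polarization identity $2\langle x_n-\ol x,x_n-x_{n-1}\rangle=\varphi_n-\varphi_{n-1}+\|x_n-x_{n-1}\|^2$, which converts the leading cross term into $(\alpha_{1,n}+\alpha_{2,n})(\varphi_n-\varphi_{n-1})$ plus remainders.

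The main obstacle is the bookkeeping of the residuals in $\|x_n-p_n\|^2$ and $\|x_n-x_{n-1}\|^2$: the latter is not summable a priori, being precisely what must be controlled. I would bound every remaining cross term by Young's inequality, introducing the slack $\sigma$, and also estimate $\|x_{n+1}-x_n\|^2\le 3(1+\lambda_n^2\beta^2)\|x_n-p_n\|^2+3\alpha_{2,n}^2\|x_n-x_{n-1}\|^2$, so as to arrive at $\varphi_{n+1}\le\varphi_n+(\alpha_{1,n}+\alpha_{2,n})(\varphi_n-\varphi_{n-1})-\beta_n$ where $\beta_n\ge 0$ is a combination of $\|x_n-p_n\|^2$ and of the step lengths. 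The exact constants $12,9,8,4,2$ in \eqref{hypotheses} reflect the number and weights of these splittings; using that $(\alpha_{i,n})$ is nondecreasing with $\alpha_{i,1}=0$, I would perform a shift in $n$ so that, after telescoping the $\|x_n-x_{n-1}\|^2$ contributions, the coefficient of $\|x_n-p_n\|^2$ stays below a negative constant while the net step-length contribution is absorbed — getting these constants to close is the delicate point. Once the inequality has the required form, Lemma \ref{ext-fejer2}(ii)--(iii) gives that $\lim_n\varphi_n$ exists and that $\sum_n\|x_n-p_n\|^2<+\infty$, whence also $\sum_n\|x_{n+1}-x_n\|^2<+\infty$, which is statement (a); in particular $\|x_n-p_n\|\to 0$ and $\|x_n-x_{n-1}\|\to 0$. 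For (b) I would apply Lemma \ref{opial} with $C=\zer(A+B)$: condition (a) holds since $\lim_n\|x_n-\ol x\|$ exists for every $\ol x\in C$; for condition (b), given $x_{n_k}\rightharpoonup x^*$ I note $p_{n_k}\rightharpoonup x^*$ and that $v_{n_k}:=\lambda_{n_k}^{-1}(y_{n_k}-p_{n_k})-Bx_{n_k}+Bp_{n_k}\in(A+B)p_{n_k}$, where $\|y_{n_k}-p_{n_k}\|\le\alpha_{1,n_k}\|x_{n_k}-x_{n_k-1}\|+\|x_{n_k}-p_{n_k}\|\to 0$, $\lambda_{n_k}\ge\lambda>0$ and $\|Bp_{n_k}-Bx_{n_k}\|\le\beta\|p_{n_k}-x_{n_k}\|\to 0$, so $v_{n_k}\to 0$ strongly; the weak--strong closedness of the graph of the maximally monotone operator $A+B$ then yields $0\in(A+B)x^*$. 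Hence $x_n\rightharpoonup\ol x\in\zer(A+B)$ and, since $\|x_n-p_n\|\to 0$, also $p_n\rightharpoonup\ol x$.

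For part (c), the element $v_n\in(A+B)p_n$ satisfies $v_n\to 0$ strongly while $p_n\rightharpoonup\ol x$. Under (i), demiregularity of $A+B$ at $\ol x$ applied to $(p_n,v_n)$ and $(\ol x,0)$ forces $p_n\to\ol x$, hence $x_n\to\ol x$. Under (ii), say $A$ uniformly monotone at $\ol x$, I would combine $\langle p_n-\ol x,(v_n-Bp_n)+B\ol x\rangle\ge\phi_A(\|p_n-\ol x\|)$ with $\langle p_n-\ol x,Bp_n-B\ol x\rangle\ge 0$ to get $\phi_A(\|p_n-\ol x\|)\le\langle p_n-\ol x,v_n\rangle\to 0$, whence $\|p_n-\ol x\|\to 0$ by the properties of $\phi_A$; the case of $B$ uniformly monotone is symmetric, extracting $\phi_B(\|p_n-\ol x\|)\le\langle p_n-\ol x,v_n\rangle\to 0$ from the monotonicity of $A$. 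In either case $x_n\to\ol x$ and $p_n\to\ol x$.
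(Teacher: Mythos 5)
Your proposal is correct and follows essentially the same route as the paper: the resolvent inclusion tested against $(\ol x,-B\ol x)$, polarization identities producing the inertial difference $\varphi_n-\varphi_{n-1}$, a telescoping/energy argument in the spirit of Alvarez--Attouch to absorb the non-summable $\|x_n-x_{n-1}\|^2$ terms (the paper implements this via the auxiliary sequence $\mu_n=\varphi_n-(\alpha_{1,n}+\alpha_{2,n})\varphi_{n-1}+\gamma_n\|x_n-x_{n-1}\|^2$ and only afterwards invokes Lemma \ref{ext-fejer2}), then Opial's lemma with the weak--strong closedness of $\gr(A+B)$ for (b) and demiregularity/uniform monotonicity for (c). The one step you leave unchecked --- that the constants in \eqref{hypotheses} make the telescoped coefficient of $\|x_{n+1}-x_n\|^2$ at most $-\sigma$ --- is precisely the computation the paper carries out in \eqref{ineq5}, and since your intermediate inequality is in fact slightly sharper than the paper's \eqref{ineq2}, the same constants close the argument.
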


\begin{proof}
Let $z$ be a fixed element in $\zer(A+B)$, that is $-Bz\in Az$, and $n \geq 1$. From the definition of the resolvent we deduce
$$\frac{1}{\lambda_n}(x_n-p_n)-Bx_n+\frac{\alpha_{1,n}}{\lambda_n}(x_n-x_{n-1})\in Ap_n.$$ Further, taking into account the
relation between $p_n$ and $x_{n+1}$ in the algorithm, we obtain
\begin{equation}\label{def-res}\frac{1}{\lambda_n}(x_n-x_{n+1})-Bp_n+\frac{\alpha_{1,n}+\alpha_{2,n}}{\lambda_n}(x_n-x_{n-1})\in Ap_n.\end{equation}
The monotonicity of $A$ delivers the inequality
$$0\leq \left\langle \frac{1}{\lambda_n}(x_n-x_{n+1})-Bp_n+\frac{\alpha_{1,n}+\alpha_{2,n}}{\lambda_n}(x_n-x_{n-1})+Bz,p_n-z\right\rangle,$$
hence
\begin{equation}\label{ineq1}0\leq \frac{1}{\lambda_n}\langle x_n-x_{n+1},p_n-z\rangle+\langle Bz-Bp_n,p_n-z\rangle+
\frac{\alpha_{1,n}+\alpha_{2,n}}{\lambda_n}\langle x_n-x_{n-1},p_n-z\rangle.\end{equation}

Since $B$ is monotone, we have $\langle Bz-Bp_n,p_n-z\rangle\leq 0$. Moreover,
$$\langle x_n-x_{n+1},p_n-z\rangle=\langle x_n-x_{n+1},p_n-x_{n+1}\rangle+\langle x_n-x_{n+1},x_{n+1}-z\rangle=$$$$
\frac{\|x_n-x_{n+1}\|^2}{2}+\frac{\|p_n-x_{n+1}\|^2}{2}-\frac{\|x_n-p_n\|^2}{2}+\frac{\|x_n-z\|^2}{2}
-\frac{\|x_n-x_{n+1}\|^2}{2}-\frac{\|x_{n+1}-z\|^2}{2}.$$
In a similar way we obtain $$\langle x_n-x_{n-1},p_n-z\rangle=\langle x_n-x_{n-1},x_n-z\rangle+\langle x_n-x_{n-1},p_n-x_n\rangle=$$$$
\frac{\|x_n-x_{n-1}\|^2}{2}+\frac{\|x_n-z\|^2}{2}-\frac{\|x_{n-1}-z\|^2}{2}+\frac{\|p_n-x_{n-1}\|^2}{2}
-\frac{\|x_n-x_{n-1}\|^2}{2}-\frac{\|x_n-p_n\|^2}{2}.$$

Further we have, by using that $B$ is $\beta$-Lipschitzian,
$$\|x_{n+1}-p_n\|^2\leq 2\lambda_n^2\beta^2\|x_n-p_n\|^2+2\alpha_{2,n}^2\|x_n-x_{n-1}\|^2$$
and
$$\|p_n-x_{n-1}\|^2\leq 2\|x_n-p_n\|^2+2\|x_n-x_{n-1}\|^2.$$
The above estimates together with \eqref{ineq1} imply
\begin{align*}
0\leq & \left(\frac{1}{2\lambda_n}+\frac{\alpha_{1,n}+\alpha_{2,n}}{2\lambda_n}\right)\|x_n-z\|^2-\frac{1}{2\lambda_n}\|x_{n+1}-z\|^2-
\frac{\alpha_{1,n}+\alpha_{2,n}}{2\lambda_n}\|x_{n-1}-z\|^2+\\
& \left(\lambda_n\beta^2-\frac{1}{2\lambda_n}+\frac{\alpha_{1,n}+\alpha_{2,n}}{\lambda_n}-
\frac{\alpha_{1,n}+\alpha_{2,n}}{2\lambda_n}\right)\|x_n-p_n\|^2 +\\
& \left(\frac{\alpha_{2,n}^2}{\lambda_n}+\frac{\alpha_{1,n}+\alpha_{2,n}}{\lambda_n}\right)\|x_n-x_{n-1}\|^2,
\end{align*}
from which we further obtain, after multiplying with $2\lambda_n$,
\begin{eqnarray}\label{ineq2}
& &\|x_{n+1}-z\|^2-(1+\alpha_{1,n}+\alpha_{2,n})\|x_n-z\|^2+(\alpha_{1,n}+\alpha_{2,n})\|x_{n-1}-z\|^2 \leq \nonumber\\
& &-(1-\alpha_{1,n}-\alpha_{2,n}-2\lambda_n^2\beta^2)\|x_n-p_n\|^2+2(\alpha_{2,n}^2+\alpha_{1,n}+\alpha_{2,n})\|x_n-x_{n-1}\|^2.\end{eqnarray}

By using the bounds given for the sequences $(\lambda_n)_{n \geq 1}$, $(\alpha_{1,n})_{n \geq 1}$ and $(\alpha_{2,n})_{n \geq 1}$ one can easily show by taking into account \eqref{hypotheses} that
$$2\lambda_n^2\beta^2 < 1- \alpha_1 - \alpha_2 \leq 1-\alpha_{1,n}-\alpha_{2,n},$$
thus
$$1-\alpha_{1,n}-\alpha_{2,n} - 2\lambda_n^2\beta^2 > 0.$$
Taking into account that
\begin{align*}
\|x_{n+1}-x_n\|^2 = & \ \|p_n-x_n+\lambda_n(Bx_n-Bp_n)+\alpha_{2,n}(x_n-x_{n-1})\|^2\\
\leq & \ 2(1+\lambda_n\beta)^2\|x_n-p_n\|^2+2\alpha_{2,n}^2\|x_n-x_{n-1}\|^2,
\end{align*}
we obtain from \eqref{ineq2}
\begin{align}\label{ineq3}
& \|x_{n+1}-z\|^2-(1+\alpha_{1,n}+\alpha_{2,n})\|x_n-z\|^2+(\alpha_{1,n}+\alpha_{2,n})\|x_{n-1}-z\|^2 \leq \nonumber\\
& -\frac{1-\alpha_{1,n}-\alpha_{2,n}-2\lambda_n^2\beta^2}{2(1+\lambda_n\beta)^2}\|x_{n+1}-x_n\|^2+\gamma_n\|x_n-x_{n-1}\|^2,
\end{align}
where
$$\gamma_n:=2(\alpha_{2,n}^2+\alpha_{1,n}+\alpha_{2,n})+\frac{\alpha_{2,n}^2(1-\alpha_{1,n}-\alpha_{2,n}-2\lambda_n^2\beta^2)}{(1+\lambda_n\beta)^2} >0.$$

(a) For the proof of this statement we are going to use some techniques from \cite{alvarez-attouch2001}. We define the sequences
$\varphi_n:=\|x_n-z\|^2$ for all $n\in\N$ and $\mu_n:=\varphi_n-(\alpha_{1,n}+\alpha_{2,n})\varphi_{n-1}+\gamma_n\|x_n-x_{n-1}\|^2$ for all
$n\geq 1$. Using the monotonicity of $(\alpha_{i,n})_{n \geq 1}$, $i=1,2,$ and the fact that $\varphi_n\geq0$ for all $n\in\N$ we get
\begin{align*}
& \mu_{n+1}-\mu_n\leq \\
& \varphi_{n+1}-(1+\alpha_{1,n}+\alpha_{2,n})\varphi_n+(\alpha_{1,n}+\alpha_{2,n})\varphi_{n-1}+\gamma_{n+1}\|x_{n+1}-x_n\|^2-\gamma_n\|x_n-x_{n-1}\|^2,
\end{align*}
which gives by \eqref{ineq3}
\begin{equation}\label{ineq4}
\mu_{n+1}-\mu_n\leq-\left(\frac{1-\alpha_{1,n}-\alpha_{2,n}-2\lambda_n^2\beta^2}{2(1+\lambda_n\beta)^2}-\gamma_{n+1}\right)\|x_{n+1}-x_n\|^2 \ \forall n \geq 1.
\end{equation}

We claim that \begin{equation}\label{ineq5}\frac{1-\alpha_{1,n}-\alpha_{2,n}-2\lambda_n^2\beta^2}{2(1+\lambda_n\beta)^2}-\gamma_{n+1}\geq \sigma \ \forall n \geq 1.\end{equation}
Indeed, this follows by taking into account that for all $n \geq 1$
\begin{align*}
& \alpha_{1,n}+\alpha_{2,n}+2(\lambda_n\beta)^2+2(1+\lambda_n\beta)^2(\gamma_{n+1}+\sigma)\leq \\
& \alpha_1+\alpha_2+2(\lambda_n\beta)^2+2(1+\lambda_n\beta)^2(3\alpha_2^2+2(\alpha_1+\alpha_2)+\sigma) \leq \\
& \alpha_1+\alpha_2+2(\lambda_n\beta)^2+4(1+(\lambda_n\beta)^2)(3\alpha_2^2+2(\alpha_1+\alpha_2)+\sigma)\leq 1.
\end{align*}
In the above estimates we used the upper bounds for $(\alpha_{i,n})_{n \geq 1}$, $i=1,2$, that
$$\gamma_{n+1}\leq 2(\alpha_2^2+\alpha_1+\alpha_2)+\alpha_2^2 \ \forall n \in \N$$
and the assumptions in \eqref{hypotheses}.

We obtain from \eqref{ineq4} and \eqref{ineq5} that
\begin{equation}\label{ineq6}\mu_{n+1}-\mu_n\leq-\sigma\|x_{n+1}-x_n\|^2 \ \forall n\geq 1.\end{equation}

The sequence $(\mu_n)_{n \geq 1}$ is nonincreasing and the bounds for $(\alpha_{i,n})_{n \geq 1}$, $i=1,2$, deliver
\begin{equation}\label{ineq7}
-(\alpha_1+\alpha_2)\varphi_{n-1}\leq\varphi_n-(\alpha_1+\alpha_2)\varphi_{n-1}\leq \mu_n\leq\mu_1 \ \forall n \geq 1.
\end{equation}

We obtain
$$\varphi_n\leq (\alpha_1+\alpha_2)^n\varphi_0+\mu_1\sum_{k=0}^{n-1}(\alpha_1+\alpha_2)^k\leq (\alpha_1+\alpha_2)^n\varphi_0+\frac{\mu_1}{1-\alpha_1-\alpha_2} \ \forall n \geq 1,$$
where we notice that $\mu_1=\varphi_1\geq 0$ (due to the relation $\alpha_{1,1}=\alpha_{2,1}=0$). Combining \eqref{ineq6} and \eqref{ineq7} we get for all $n \geq 1$
$$\sigma\sum_{k=1}^{n}\|x_{k+1}-x_k\|^2\leq \mu_1-\mu_{n+1}\leq \mu_1+(\alpha_1+\alpha_2)\varphi_n\leq (\alpha_1+\alpha_2)^{n+1}\varphi_0+\frac{\mu_1}{1-\alpha_1-\alpha_2},$$ which
shows that $\sum_{n \in \N}\|x_{n+1}-x_n\|^2<+\infty$.

Combining this relation with \eqref{ineq2} and Lemma \ref{ext-fejer2} it yields
$$\sum_{n \geq 1}\left(1-\alpha_{1,n}-\alpha_{2,n}-2\lambda_n^2\beta^2\right)\|x_n-p_n\|^2<+\infty.$$
Moreover, from \eqref{ineq5} we have $1-\alpha_{1,n}-\alpha_{2,n}-2\lambda_n^2\beta^2\geq 2\sigma(1+\lambda\beta)^2$ for all $n \geq 1$ and obtain, consequently, $\sum_{n \geq 1} \|x_n-p_n\|^2< +\infty$.

(b) We are going to use Lemma \ref{opial}. We proved above that for an arbitrary $z\in\zer(A+B)$ the inequality \eqref{ineq2} is true. By
part (a) and Lemma \ref{ext-fejer2} it follows that $\lim_{n\rightarrow+\infty}\|x_n-z\|$ exists. On the other hand, let $x$ be a sequential weak cluster point of
$(x_n)_{n\in\N}$, that is, it has a subsequence $(x_{n_k})_{k \in \N}$ fulfilling $x_{n_k}\rightharpoonup x$ as $k\rightarrow+\infty$. Since $x_n-p_n\rightarrow 0$ as $n\rightarrow+\infty$, 
we get $p_{n_k}\rightharpoonup x$ as $k\rightarrow+\infty$. Since $A+B$ is maximally monotone
(see \cite[Corollary 20.25 and Corollary 24.4]{bauschke-book}), its graph is sequentially closed in
the weak-strong topology of ${\cal H}\times {\cal H}$ (see \cite[Proposition 20.33(ii)]{bauschke-book}).
As $(\lambda_n)_{n \geq 1}$ and $(\alpha_{i,n})_{n \geq 1}$, $i=1,2$, are bounded, we derive from \eqref{def-res} and part (a) that $0\in (A+B)x$, hence $x\in\zer(A+B)$. 
By Lemma \ref{opial} there exists $\ol x\in\zer(A+B)$ such that $x_n\rightharpoonup \ol x$ as $n\rightarrow +\infty$. In view of (a) we have
$p_n\rightharpoonup \ol x$ as $n \rightarrow +\infty$.

(c) Since (ii) implies that $A+B$ is uniformly monotone at $\ol x$, hence demiregular at $\ol x$, it is sufficient to prove the statement under condition (i).
Since $p_n\rightharpoonup\ol x$ and $\frac{1}{\lambda_n}(x_n-x_{n+1})+\frac{\alpha_{1,n}+\alpha_{2,n}}{\lambda_n}(x_n-x_{n-1})\rightarrow 0$ as $n \rightarrow +\infty$, the result follows easily from \eqref{def-res} and the definition of demiregular operators.
\end{proof}

\begin{remark}\label{xo-x1} Let us mention that the conclusion of the theorem holds also in case one assumes that the sequence
$(\alpha_{1,n}+\alpha_{2,n})_{n \geq 1}$ is nondecreasing. Moreover, the condition $\alpha_{1,1}=\alpha_{2,1}=0$ was imposed in order to ensure $\mu_1\geq 0$, which is needed in the proof.
An alternative is to require that $x_0=x_1$, in which case the assumption $\alpha_{1,1}=\alpha_{2,1}=0$ is not anymore necessary.
\end{remark}

\begin{remark}\label{tseng-classic-error-free} Assuming that $\alpha_2=0$, which enforces $\alpha_{2,n}=0$ for all $n \geq 1$, the conclusions of Theorem \ref{inertial-tseng} remains valid if one takes as upper bound for $(\lambda_n)_{n \geq 1}$ the expression $\frac{1}{\beta}\sqrt{\frac{1-5\alpha_1-2\sigma}{4\alpha_1+2\sigma+1}}$. This is due to the fact in this situation one can use in its proof the improved inequalities $\|x_{n+1}-p_n\|^2\leq \lambda_n^2\beta^2\|x_n-p_n\|^2$ and $\|x_{n+1}-x_n\|^2\leq(1+\lambda_n\beta)^2\|x_n-p_n\|^2$ for all $n \geq 1$. On the other hand, let us also notice that the algorithmic scheme obtained in this way and its convergence properties can be seen as generalizations of the corresponding statements given for the
error-free case of the classical forward-backward-forward algorithm proposed by Tseng in \cite{tseng} (see also \cite[Theorem 2.5]{br-combettes}).  Indeed, if we further set $\alpha_1=0$, 
having as consequence that $\alpha_{1,n}=0$ for all $n \geq 1$, we obtain nothing else than the iterative scheme from \cite{tseng, br-combettes}. Notice that for $\varepsilon\in(0,1/(\beta+1))$, 
one can chose $\lambda:=\varepsilon$ and $\sigma:=\frac{1-(1-\varepsilon)^2}{2(1+(1-\varepsilon)^2)}$. In this case the sequence $(\lambda_n)_{n \geq 1}$ must fulfill the inequalities $\varepsilon\leq\lambda_n\leq \frac{1}{\beta}\sqrt{\frac{1-2\sigma}{2\sigma+1}}=\frac{1-\varepsilon}{\beta}$ for all
$n \geq 1$, which is exactly the situation considered in \cite{br-combettes}.
\end{remark}

\begin{remark}\label{inertial-prox} In case $Bx=0$ for all $x\in\cal{H}$ the proposed iterative scheme becomes
$$x_{n+1}=J_{\lambda_n A}[x_n+\alpha_{1,n}(x_n-x_{n-1})]+\alpha_{2,n}(x_n-x_{n-1}) \ \forall n \geq 1,$$ 
and is to the best of our knowledge new and can be regarded as an extension of the classical proximal-point algorithm (see \cite{rock-prox}) in the context of solving the
monotone inclusion problem $0 \in Ax$. If, additionally, $\alpha_2=0$,  which enforces as already noticed $\alpha_{2,n}=0$ for all $n \geq 1$, we get the algorithm
$$x_{n+1}=J_{\lambda_n A}[x_n+\alpha_{1,n}(x_n-x_{n-1})],$$
the convergence of which has been investigated in \cite{alvarez-attouch2001}.
\end{remark}

\section{Solving monotone inclusion problems involving mixtures of linearly composed and parallel-sum type operators}\label{sec3}

In this section we employ the inertial forward-backward-forward splitting algorithm proposed above to the concomitantly solving of a primal monotone inclusion problem
involving mixtures of linearly composed and parallel-sum type operators and its Attouch-Th\'{e}ra-type dual problem. We consider the following setting.

\begin{problem}\label{pr1}
Let ${\cal H}$ be a real Hilbert space, $z\in {\cal H}$, $A:{\cal H}\rightrightarrows {\cal H}$ a maximally monotone operator and
$C:{\cal H}\rightarrow {\cal H}$ a monotone and $\mu$-Lipschitzian operator for $\mu>0$.
Let $m$ be a strictly positive integer and, for any $i\in\{1,\!...,m\}$, let ${\cal G}_i$  be a real Hilbert space, $r_i\in {\cal G}_i$,
let $B_i:{\cal G }_i \rightrightarrows {\cal G}_i$ be a maximally monotone operator, let
$D_i:{\cal G}_i\rightrightarrows {\cal G}_i$ be monotone such that $D_i^{-1}$ is $\nu_i$-Lipschitzian for $\nu_i>0$ and
let $L_i:{\cal H}\rightarrow$ ${\cal G}_i$ be a nonzero linear continuous operator.
The problem is to solve the primal inclusion
\begin{equation}\label{sum-k-primal-C-D}
\mbox{find } \ol x \in {\cal H} \ \mbox{such that} \ z\in A\ol x+ \sum_{i=1}^{m}L_i^*\big((B_i\Box D_i)(L_i \ol x-r_i)\big)+C \ol x,
\end{equation}
together with the dual inclusion
\begin{equation}\label{sum-k-dual-C-D}
\mbox{ find } \ol v_1 \in {\cal G}_1,\!...,\ol v_m \in {\cal G}_m \ \mbox{such that } \exists x\in {\cal H}: \
\left\{
\begin{array}{ll}
z-\sum_{i=1}^{m}L_i^*\ol v_i\in Ax+Cx\\
\ol v_i\in (B_i\Box D_i)(L_ix-r_i),\ i=1,\!...,m.
\end{array}\right.
\end{equation}
\end{problem}

We say that $(\ol x, \ol v_1,\!...,\ol v_m)\in{\cal H} \times$ ${\cal G}_1 \times...\times {\cal G}_m$ is a primal-dual solution to Problem \ref{pr1}, if

\begin{equation}\label{prim-dual-C-D}z-\sum_{i=1}^{m}L_i^*\ol v_i\in A\ol x+C\ol x \mbox{ and }\ol v_i\in (B_i\Box D_i)(L_i\ol x-r_i),\ i=1,\!...,m.\end{equation}

If $\ol x\in{\cal H}$ is a solution to \eqref{sum-k-primal-C-D}, then there exists
$(\ol v_1,\!...,\ol v_m)\in$ ${\cal G}_1 \times...\times {\cal G}_m$ such that $(\ol x, \ol v_1,\!...,\ol v_m)$ is a primal-dual solution to
Problem \ref{pr1} and, if $(\ol v_1,\!...,\ol v_m)\in$ ${\cal G}_1 \times...\times {\cal G}_m$ is a solution to \eqref{sum-k-dual-C-D}, then there
exists $\ol x\in{\cal H}$ such that $(\ol x, \ol v_1,\!...,\ol v_m)$ is a primal-dual solution to Problem \ref{pr1}. Moreover, if
$(\ol x, \ol v_1,\!...,\ol v_m)\in{\cal H} \times$ ${\cal G}_1 \times...\times {\cal G}_m$ is a primal-dual solution to Problem \ref{pr1},
then $\ol x$ is a solution to \eqref{sum-k-primal-C-D} and $(\ol v_1,\!...,\ol v_m)\in$ ${\cal G}_1 \times...\times {\cal G}_m$ is a solution
to \eqref{sum-k-dual-C-D}.\vspace{0.2cm}

Problem \ref{pr1} covers a large class of monotone inclusion problems and we refer the reader to consult \cite{combettes-pesquet}
for several interesting particular instances of it. The main result of this section follows.

\begin{theorem}\label{inertial-primal-dual} In Problem \ref{pr1} suppose that

\begin{equation}\label{ran}z\in\ran\left(A+ \sum_{i=1}^{m}L_i^*\big((B_i\Box D_i)(L_i \cdot-r_i)\big)+C\right).\end{equation}

Chose $x_0,x_1\in\cal{H}$ and $v_{i,0},v_{i,1}\in$ ${\cal G}_i$, $i=1,...,m,$ and set

$$(\forall n\geq 1)\hspace{0.2cm}\left\{
\begin{array}{rl}
p_{1,n} = & J_{\lambda_n A}[x_n-\lambda_n(Cx_n+\sum_{i=1}^mL_i^*v_{i,n}-z)+\alpha_{1,n}(x_n-x_{n-1})]\\
p_{2,i,n}= & J_{\lambda_n B_i^{-1}}[v_{i,n}+\lambda_n(L_ix_n-D_i^{-1}v_{i,n}-r_i)+\alpha_{1,n}(v_{i,n}-v_{i,n-1})],\\ 
           & i=1,...,m\\
v_{i,n+1}= & \lambda_nL_i(p_{1,n}-x_n)+\lambda_n(D_i^{-1}v_{i,n}-D_i^{-1}p_{2,i,n})+p_{2,i,n}\\
           & +\alpha_{2,n}(v_{i,n}-v_{i,n-1}), i=1,...,m\\
x_{n+1} = & \lambda_n\sum_{i=1}^mL_i^*(v_{i,n}-p_{2,i,n})+\lambda_n(Cx_n-Cp_{1,n})+p_{1,n}\\
         & +\alpha_{2,n}(x_n-x_{n-1}).
\end{array}\right.$$
Consider $\lambda, \sigma>0$ and $\alpha_1, \alpha_2\geq 0$ such that 
\begin{equation*}
12\alpha_2^2+9(\alpha_1+\alpha_2)+4\sigma<1 \ \mbox{and} \ \lambda\leq\lambda_n\leq \frac{1}{\beta}\sqrt{\frac{1-12\alpha_2^2-9(\alpha_1+\alpha_2)-4\sigma}{12\alpha_2^2+8(\alpha_1+\alpha_2)+4\sigma +2}} \ \forall n \geq 1,
\end{equation*}
where $$\beta=\max\{\mu,\nu_1,...,\nu_m\}+\sqrt{\sum_{i=1}^m\|L_i\|^2},$$
and for $i=1,2$ the nondecreasing sequences $(\alpha_{i,n})_{n \geq 1}$ with $\alpha_{i,1}=0$ and $0\leq\alpha_{i,n}\leq\alpha_i$ for all $n \geq 1$. Then the following statements are true: 
\begin{itemize}\item[(a)] $\sum_{n\in\N}\|x_{n+1}-x_n\|^2<+\infty$,
$\sum_{n \geq 1}\|x_n-p_{1,n}\|^2< +\infty$ and, for $i=1,...,m$, \newline $\sum_{n\in\N}\|v_{i,n+1}-v_{i,n}\|^2$ $< + \infty$ and $\sum_{n \geq 1}\|v_{i,n}-p_{2,i,n}\|^2<+\infty$;
\item[(b)] There exists $(\ol x, \ol v_1,\!...,\ol v_m)\in{\cal H} \times$ ${\cal G}_1 \times...\times {\cal G}_m$
a primal-dual solution to Problem \ref{pr1} such that the following hold: 
\begin{itemize}\item[(i)] $x_n\rightharpoonup\ol x$, $p_{1,n}\rightharpoonup\ol x$ and, for $i=1,...,m$, $v_{i,n}\rightharpoonup\ol v_i$ and
$p_{2,i,n}\rightharpoonup\ol v_i$ as $n \rightarrow +\infty$; 
\item[(ii)] If $A+C$ is uniformly monotone at $\ol x$, then $x_n\rightarrow\ol x$ and
$p_{1,n}\rightarrow\ol x$ as $n \rightarrow +\infty$.
\item[(iii)] If  $B_i^{-1}+D_i^{-1}$ is uniformly monotone at $\ol v_i$ for some $i\in\{1,...,m\}$, then $v_{i,n}\rightarrow\ol v_i$ and $p_{2,i,n}\rightarrow\ol v_i$ as $n \rightarrow +\infty$.
\end{itemize}
\end{itemize}
\end{theorem}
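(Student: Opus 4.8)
The plan is to recast the entire four-line recursion as a single instance of the inertial forward-backward-forward scheme of Theorem \ref{inertial-tseng}, carried out in a product space; this is the classical product-space reformulation of primal-dual inclusion problems. First I would introduce the Hilbert space ${\cal K}={\cal H}\times{\cal G}_1\times\cdots\times{\cal G}_m$ equipped with the product inner product, so that $\|(x,v_1,\ldots,v_m)\|^2=\|x\|^2+\sum_{i=1}^m\|v_i\|^2$, and define
$$\fA(x,v_1,\ldots,v_m)=(-z+Ax)\times(r_1+B_1^{-1}v_1)\times\cdots\times(r_m+B_m^{-1}v_m),$$
together with $\fB=\fB_1+\fB_2$, where $\fB_1(x,v_1,\ldots,v_m)=(Cx,D_1^{-1}v_1,\ldots,D_m^{-1}v_m)$ and $\fB_2(x,v_1,\ldots,v_m)=\big(\sum_{i=1}^mL_i^*v_i,\,-L_1x,\ldots,-L_mx\big)$. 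I would then record three facts: (i) $\fA$ is maximally monotone, being a block-separable product of the maximally monotone operators $A$ and $B_i^{-1}$ up to additive constants; (ii) $\fB_1$ is monotone and $\max\{\mu,\nu_1,\ldots,\nu_m\}$-Lipschitzian while $\fB_2$ is linear and skew (hence monotone) with $\|\fB_2\|\le\sqrt{\sum_{i=1}^m\|L_i\|^2}$ by Cauchy--Schwarz, so $\fB$ is monotone and $\beta$-Lipschitzian with exactly the $\beta$ of the statement; (iii) unravelling $0\in(\fA+\fB)(x,v_1,\ldots,v_m)$ block by block reproduces precisely \eqref{prim-dual-C-D}, so that $\zer(\fA+\fB)$ is the set of primal-dual solutions to Problem \ref{pr1}, which is nonempty under \eqref{ran}.

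Next I would verify that the displayed scheme is literally the scheme of Theorem \ref{inertial-tseng} for $\fA,\fB$. Since $\fA$ is separable, its resolvent acts coordinatewise, $J_{\lambda\fA}(x,v_1,\ldots,v_m)=\big(J_{\lambda A}(x+\lambda z),J_{\lambda B_1^{-1}}(v_1-\lambda r_1),\ldots,J_{\lambda B_m^{-1}}(v_m-\lambda r_m)\big)$. Writing $\fx_n=(x_n,v_{1,n},\ldots,v_{m,n})$ and computing $\fB\fx_n$ and $\fB p_n$, the forward step before the resolvent produces exactly the arguments appearing in the formulas for $p_{1,n}$ and $p_{2,i,n}$ (the additive $\lambda_nz$ and $-\lambda_nr_i$ coming from the resolvent of $\fA$), and the correction step reproduces the formulas for $x_{n+1}$ and $v_{i,n+1}$. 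Because the hypotheses on $\lambda,\sigma,\alpha_1,\alpha_2,(\lambda_n),(\alpha_{i,n})$ are identical to \eqref{hypotheses}, Theorem \ref{inertial-tseng} applies directly: part (a) of that theorem, read through the product norm, yields all the summability statements of (a) here; part (b) yields weak convergence of $\fx_n$ and $p_n$ to some $\bar{\fx}=(\bar x,\bar v_1,\ldots,\bar v_m)\in\zer(\fA+\fB)$, and since weak convergence in ${\cal K}$ is componentwise and $\bar{\fx}$ is a primal-dual solution, statement (b)(i) follows.

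The main obstacle is statements (ii) and (iii): Theorem \ref{inertial-tseng}(c) cannot be invoked directly, because only a \emph{partial} uniform monotonicity (of $A+C$, resp.\ $B_i^{-1}+D_i^{-1}$) is assumed, whereas that part needs uniform monotonicity of the whole $\fA+\fB$. I would therefore revisit the monotonicity estimate. Put $\eta_n:=\frac{1}{\lambda_n}(\fx_n-\fx_{n+1})+\frac{\alpha_{1,n}+\alpha_{2,n}}{\lambda_n}(\fx_n-\fx_{n-1})$; then \eqref{def-res} read in ${\cal K}$ gives $\eta_n\in(\fA+\fB)p_n$, while $0\in(\fA+\fB)\bar{\fx}$, and part (a) forces $\eta_n\to 0$ with $\|p_n-\bar{\fx}\|$ bounded, so $\langle\eta_n,p_n-\bar{\fx}\rangle\to 0$. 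Now I would split $\fA+\fB=(\fA+\fB_1)+\fB_2$: the skew part $\fB_2$ contributes $0$ to $\langle\eta_n,p_n-\bar{\fx}\rangle$, and $\fA+\fB_1$ is block-diagonal with first block $A+C$ and $i$-th block $B_i^{-1}+D_i^{-1}$, so $\langle\eta_n,p_n-\bar{\fx}\rangle$ is a sum of nonnegative block contributions. Under (ii) the first contribution is bounded below by $\phi_{A+C}(\|p_{1,n}-\bar x\|)$, whence $\phi_{A+C}(\|p_{1,n}-\bar x\|)\to 0$; since $\phi_{A+C}$ is increasing and vanishes only at $0$, this gives $p_{1,n}\to\bar x$, and $x_n-p_{1,n}\to 0$ from (a) then gives $x_n\to\bar x$. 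Statement (iii) follows identically by isolating the $i$-th block.
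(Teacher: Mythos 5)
Your proposal is correct and follows essentially the same route as the paper: the identical product-space operators (the paper's $\fM$ and $\fQ$ are your $\fA$ and $\fB$), reduction to Theorem \ref{inertial-tseng} for (a) and (b)(i), and a separate argument for (ii)--(iii) exploiting partial uniform monotonicity. Your packaging of that last step via the skew part $\fB_2$ contributing zero to $\langle\eta_n,p_n-\bar{\fx}\rangle$ is just a more abstract phrasing of the paper's explicit cancellation of the $L_i^*$ cross terms when summing its inequalities \eqref{A+C-unif-mon} and \eqref{B+D-mon}.
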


\begin{proof} We will apply Theorem \ref{inertial-tseng} in an appropriate product space and will make use to this end of  a construction similar to the one considered in \cite{combettes-pesquet}. We endow the product space 
$\fK={\cal H} \times$ ${\cal G}_1 \times...\times {\cal G}_m$ with the inner product and the associated norm defined for all
$(x,v_1,...,v_m), (y,w_1,...,w_m)\in \fK$ as
$$\langle (x,v_1,...,v_m),(y,w_1,...,w_m)\rangle_{\fK}=\langle x,y\rangle_{\cal{H}}+\sum_{i=1}^m\langle v_i,w_i\rangle_{{\cal{G}}_i}$$ and
$$\|(x,v_1,...,v_m)\|_{\fK}=\sqrt{\|x\|_{\cal{H}}^2+\sum_{i=1}^m\|v_i\|_{{\cal{G}}_i}^2},$$
respectively.

We introduce the operators $\fM:\fK\rightrightarrows\fK$,
$$\fM (x,v_1,...,v_m)=(-z+Ax)\times (r_1+B_1^{-1}v_1)\times....\times (r_m+B_m^{-1}v_m)$$ and
$\fQ: \fK\rightarrow\fK$,
$$\fQ(x,v_1,...,v_m)=\Big(Cx+\sum_{i=1}^mL_i^*v_i,-L_1x+D_1^{-1}v_1,...,-L_mx+D_m^{-1}v_m\Big)$$
and show that Theorem \ref{inertial-tseng} can be applied for the operators $\fM$ and $\fQ$ in the product space
$\fK$. Let us start by noticing that
$$\eqref{ran}\Leftrightarrow\zer(\fM+\fQ)\neq\emptyset$$ 
and
\begin{equation}\label{pr-dual-product-space}
(x,v_1,...,v_m)\in\zer(\fM+\fQ)\Leftrightarrow (x,v_1,...,v_m)\mbox{ is a primal-dual solution of Problem \ref{pr1}}.
\end{equation}
Further, since $A$ and $B_i$, $i=1,...,m$ are maximally monotone, $\fM$ is maximally monotone, too (see \cite[Props. 20.22, 20.23]{bauschke-book}). On the other hand, 
$\fQ$  is a monotone and $\beta$-Lipschitzian (see, for instance, the proof of \cite[Theorem 3.1]{combettes-pesquet}).

For every $(x,v_1,...,v_m)\in\fK$ and every $\lambda > 0$ we have (see \cite[Proposition 23.16]{bauschke-book})
$$J_{\lambda \fM}(x,v_1,...,v_m)=(J_{\lambda A}(x+\lambda z),J_{\lambda B_1^{-1}}(v_1-\lambda r_1),...,J_{\lambda B_m^{-1}}(v_m-\lambda r_m)).$$

Set
$$\fx_n=(x_n,v_{1,n},...,v_{m,n}) \ \forall n \in \N \ \mbox{and} \ \fp_n=(p_{1,n},p_{2,1,n},...,p_{2,m,n}) \ \forall n \geq 1.$$

In the light of the above considerations it follows that the iterative scheme in the statement of Theorem \ref{inertial-primal-dual} can be equivalently written as
$$\forall n\geq 1\hspace{0.2cm}\left\{
\begin{array}{ll}
\fp_n=J_{\lambda_n \fM}[\fx_n-\lambda_n\fQ\fx_n+\alpha_{1,n}(\fx_n-\fx_{n-1})]\\
\fx_{n+1}=\fp_n+\lambda_n(\fQ \fx_n-\fQ p_n)+\alpha_{2,n}(\fx_n-\fx_{n-1}),
\end{array}\right.$$ which is nothing else than the algorithm stated in Theorem \ref{inertial-tseng} formulated for the operators $\fM$ and $\fQ$.

(a) Is a direct consequence of Theorem \ref{inertial-tseng}(a).

(b)(i) Is a direct consequence of Theorem \ref{inertial-tseng}(b) and \eqref{pr-dual-product-space}.

(b)(ii) Let $n \geq 1$ be fixed. From the definition of the resolvent we get
$$\frac{1}{\lambda_n}(x_n-p_{1,n})-Cx_n-\sum_{i=1}^mL_i^*v_{i,n}+z+\frac{\alpha_{1,n}}{\lambda_n}(x_n-x_{n-1})\in Ap_{1,n}.$$
The update rule for $x_n$ yields
$$\frac{1}{\lambda_n}(p_{1,n}-x_{n+1})+Cx_n+\sum_{i=1}^mL_i^*(v_{i,n}-p_{2,i,n})+\frac{\alpha_{2,n}}{\lambda_n}(x_n-x_{n-1})=Cp_{1,n},$$
hence,
$$\frac{1}{\lambda_n}(x_n-x_{n+1})-\sum_{i=1}^mL_i^*p_{2,i,n}+z+\frac{\alpha_{1,n}+\alpha_{2,n}}{\lambda_n}(x_n-x_{n-1})\in (A+C)p_{1,n}.$$
Further, since $z-\sum_{i=1}^mL_i^*\ol v_i\in (A+C)\ol x$ and $A+C$ is uniformly monotone at $\ol x$, there exists
an increasing function $\phi_{A,C}:[0,+\infty)\rightarrow[0,+\infty]$ that vanishes only at $0$, such that
$$\left\langle \!p_{1,n}-\ol x, \frac{1}{\lambda_n}(x_n-x_{n+1})-\sum_{i=1}^mL_i^*p_{2,i,n}+z+ \!\frac{\alpha_{1,n}+\alpha_{2,n}}{\lambda_n}(x_n-x_{n-1})-\!\left(z-\sum_{i=1}^mL_i^*\ol v_i\right)\!\right\rangle$$ 
$$\geq \phi_{A,C}(\|p_{1,n}-\ol x\|),$$
thus
\begin{align}\label{A+C-unif-mon} & \frac{1}{\lambda_n}\langle p_{1,n}-\ol x, x_n-x_{n+1}\rangle+\left\langle p_{1,n}-\ol x,\sum_{i=1}^mL_i^*(\ol v_i-p_{2,i,n})\right\rangle \nonumber \\
& + \frac{\alpha_{1,n}+\alpha_{2,n}}{\lambda_n}\langle p_{1,n}-\ol x,x_n-x_{n-1}\rangle \geq \phi_{A,C}(\|p_{1,n}-\ol x\|).
\end{align}

In a similar way, for $i=1,...,m$, the definition of $p_{2,i,n}$ yields
$$\frac{1}{\lambda_n}(v_{i,n}-p_{2,i,n})+L_ix_n-D_i^{-1}v_{i,n}-r_i+\frac{\alpha_{1,n}}{\lambda_n}(v_{i,n}-v_{i,n-1})\in B_i^{-1}p_{2,i,n}$$
and from 
$$\frac{1}{\lambda_n}(p_{2,i,n}-v_{i,n+1})+L_ip_{1,n}-L_ix_n+D_i^{-1}v_{i,n}+\frac{\alpha_{2,n}}{\lambda_n}(v_{i,n}-v_{i,n-1})=D_i^{-1}p_{2,i,n}$$ 
we further obtain
$$\frac{1}{\lambda_n}(v_{i,n}-v_{i,n+1})+L_ip_{1,n}-r_i+\frac{\alpha_{1,n}+\alpha_{2,n}}{\lambda_n}(v_{i,n}-v_{i,n-1})\in (B_i^{-1}+D_i^{-1})p_{2,i,n}.$$

Moreover, since $L_i\ol x-r_i\in(B_i^{-1}+D_i^{-1})\ol v_i$, the monotonicity of $B_i^{-1}+D_i^{-1}, i=1,...,m,$ yields the inequality
$$\left\langle  \!\frac{1}{\lambda_n}(v_{i,n}-v_{i,n+1})+L_ip_{1,n}-r_i+\frac{\alpha_{1,n}+\alpha_{2,n}}{\lambda_n}(v_{i,n}-v_{i,n-1})
-(L_i\ol x-r_i),p_{2,i,n}-\ol v_i \!\right\rangle\geq 0,$$
hence \begin{eqnarray}\label{B+D-mon} & & \frac{1}{\lambda_n}\sum_{i=1}^m\langle v_{i,n}-v_{i,n+1},p_{2,i,n}-\ol v_i\rangle+\left\langle p_{1,n}-\ol x,\sum_{i=1}^mL_i^*(p_{2,i,n}-\ol v_i)\right\rangle\nonumber\\
& & +\frac{\alpha_{1,n}+\alpha_{2,n}}{\lambda_n}\sum_{i=1}^m\langle v_{i,n}-v_{i,n-1},p_{2,i,n}-\ol v_i\rangle\geq 0.\end{eqnarray}

Summing up the inequalities \eqref{A+C-unif-mon} and \eqref{B+D-mon} we obtain for all $n \geq 1$
\begin{eqnarray}\label{ABCD1}
& & \frac{1}{\lambda_n}\langle p_{1,n}-\ol x, x_n-x_{n+1}\rangle+ \frac{\alpha_{1,n}+\alpha_{2,n}}{\lambda_n}\langle p_{1,n}-\ol x,x_n-x_{n-1}\rangle\nonumber\\
& & +\frac{1}{\lambda_n}\sum_{i=1}^m\langle v_{i,n}-v_{i,n+1},p_{2,i,n}-\ol v_i\rangle+\frac{\alpha_{1,n}+\alpha_{2,n}}{\lambda_n}\sum_{i=1}^m\langle v_{i,n}-v_{i,n-1},p_{2,i,n}-\ol v_i\rangle\\
& & \geq \phi_{A,C}(\|p_{1,n}-\ol x\|)\nonumber.
\end{eqnarray}

It then follows from (a), (b)(i) and the boundedness of the sequences $(\alpha_{i,n})_{n \geq 1}$, $i=1,2$ and $(\lambda_n)_{n \geq 1}$ that
$\lim_{n\rightarrow + \infty}\phi_{A,C}(\|p_{1,n}-\ol x\|)=0$, thus $p_{1,n}\rightarrow\ol x$ as $n \rightarrow +\infty$. From (a) we get that $x_n\rightarrow\ol x$ as $n \rightarrow +\infty$.

(b)(iii) In this case one can show that instead of \eqref{ABCD1} one has for all $n \geq 1$
\begin{eqnarray}\label{ABCD2}
& & \frac{1}{\lambda_n}\langle p_{1,n}-\ol x, x_n-x_{n+1}\rangle+
\frac{\alpha_{1,n}+\alpha_{2,n}}{\lambda_n}\langle p_{1,n}-\ol x,x_n-x_{n-1}\rangle\nonumber\\
& & +\frac{1}{\lambda_n}\sum_{j=1}^m\langle v_{j,n}-v_{j,n+1},p_{2,j,n}-\ol v_j\rangle
+\frac{\alpha_{1,n}+\alpha_{2,n}}{\lambda_n}\sum_{j=1}^m\langle v_{j,n}-v_{j,n-1},p_{2,j,n}-\ol v_j\rangle\\
& & \geq\phi_{B_i^{-1},D_i^{-1}}(\|p_{2,i,n}-\ol v_i\|)\nonumber.
\end{eqnarray}
where $\phi_{B_i^{-1},D_i^{-1}}:[0,+\infty)\rightarrow[0,+\infty]$ is an increasing function that vanishes only at $0$. The same arguments as in (b)(ii) provide the desired conclusion.
\end{proof}

\begin{remark} The case $\alpha_1=\alpha_2=0$, which enforces $\alpha_{1,n}=\alpha_{2,n}=0$ for all $n \geq 1$, shows that error-free case of
the forward-backward-forward algorithm considered in \cite[Theorem 3.1]{combettes-pesquet} is a particular case of the iterative scheme introduced in Theorem \ref{inertial-primal-dual}. We refer to
Remark \ref{tseng-classic-error-free} for a discussion on how to choose the parameters $\lambda$ and $\sigma$ in order to get exactly the bounds from \cite[Theorem 3.1]{combettes-pesquet}.
\end{remark}

\section{Convex optimization problems}\label{sec-opt-pb}

The aim of this section is to show how the inertial forward-backward-forward primal-dual algorithm can be implemented when solving a primal-dual pair of convex optimization problems.

For a function $f:{\cal H}\rightarrow\overline{\R}$, where $\overline{\R}:=\R\cup\{\pm\infty\}$ is the extended real line, 
we denote by $\dom f=\{x\in {\cal H}:f(x)<+\infty\}$ its \textit{effective domain} and say that $f$ is \textit{proper} if $\dom f\neq\emptyset$ and $f(x)\neq-\infty$ for all $x\in {\cal H}$. 
We denote by $\Gamma({\cal H})$ the family of proper, convex and lower semi-continuous extended real-valued functions defined on ${\cal H}$. 
Let $f^*:{\cal H} \rightarrow \overline \R$, $f^*(u)=\sup_{x\in {\cal H}}\{\langle u,x\rangle-f(x)\}$ for all $u\in {\cal H}$, be the \textit{conjugate function} of $f$. 
The \textit{subdifferential} of $f$ at $x\in {\cal H}$, with $f(x)\in\R$, is the set $\partial f(x):=\{v\in {\cal H}:f(y)\geq f(x)+\langle v,y-x\rangle \ \forall y\in {\cal H}\}$. 
We take by convention $\partial f(x):=\emptyset$, if $f(x)\in\{\pm\infty\}$.  Notice that if $f\in\Gamma({\cal H})$, then $\partial f$ is a maximally monotone operator (see \cite{rock}) and it holds $(\partial f)^{-1} =
\partial f^*$. For two proper functions $f,g:{\cal H}\rightarrow \overline{\R}$, we consider their \textit{infimal convolution}, which is the function $f\Box g:{\cal H}\rightarrow\B$, defined by 
$(f\Box g)(x)=\inf_{y\in {\cal H}}\{f(y)+g(x-y)\}$, for all $x\in {\cal H}$.

Let $S\subseteq {\cal H}$ be a nonempty set. The \textit{indicator function} of $S$, $\delta_S:{\cal H}\rightarrow \overline{\R}$, is the function which takes the value $0$ on $S$ and $+\infty$ otherwise. 
The subdifferential of the indicator function is the \textit{normal cone} of $S$, that is $N_S(x)=\{u\in {\cal H}:\langle u,y-x\rangle\leq 0 \ \forall y\in S\}$, if $x\in S$ and $N_S(x)=\emptyset$ for $x\notin S$.

When $f\in\Gamma({\cal H})$ and $\gamma > 0$, for every $x \in {\cal H}$ we denote by $\prox_{\gamma f}(x)$ the \textit{proximal point} of parameter $\gamma$ of $f$ at $x$, 
which is the unique optimal solution of the optimization problem
\begin{equation}\label{prox-def}\inf_{y\in {\cal H}}\left \{f(y)+\frac{1}{2\gamma}\|y-x\|^2\right\}.
\end{equation}
Notice that $J_{\gamma\partial f}=(\id_{\cal H}+\gamma\partial f)^{-1}=\prox_{\gamma f}$, thus  $\prox_{\gamma f} :{\cal H} \rightarrow {\cal H}$ is a single-valued operator fulfilling the extended
\textit{Moreau's decomposition formula}
\begin{equation}\label{prox-f-star}
\prox\nolimits_{\gamma f}+\gamma\prox\nolimits_{(1/\gamma)f^*}\circ\gamma^{-1}\id\nolimits_{\cal H}=\id\nolimits_{\cal H}.
\end{equation}
Let us also recall that a proper function $f:{\cal H} \rightarrow \overline \R$ is said to be \textit{uniformly convex}, if there exists
an increasing function $\phi :[0,+\infty)\rightarrow[0,+\infty]$ which vanishes only at $0$ and such that $f(t x+(1-t)y)+t(1-t)\phi(\|x-y\|)\leq tf(x)+(1-t)f(y)$ for all $x,y\in\dom f$
and $t\in(0,1)$. In case this inequality holds for $\phi=(\beta/2)(\cdot)^2$, where $\beta >0$, then $f$ is said to be
\textit{$\beta$-strongly convex}. Let us mention that this property implies $\beta$-strong monotonicity of $\partial f$ (see \cite[Example 22.3]{bauschke-book})
(more general, if $f$ is uniformly convex, then $\partial f$ is uniformly monotone, see \cite[Example 22.3]{bauschke-book}).

Finally, we notice that for $f=\delta_S$, where $S\subseteq {\cal H}$ is a nonempty convex and closed set, it holds
\begin{equation}\label{projection}
J_{\gamma N_S}=J_{N_S}=J_{\partial \delta_S} = (\id\nolimits_{\cal H}+N_S)^{-1}=\prox\nolimits_{\delta_S}=P_S,
\end{equation}
where  $P_S :{\cal H} \rightarrow C$ denotes the \textit{projection operator} on $S$ (see \cite[Example 23.3 and Example 23.4]{bauschke-book}).

We investigate the applicability of the algorithm introduced in Section \ref{sec3} in the context of the solving of the following primal-dual pair of convex optimization problems.

\begin{problem}\label{pr5} Let ${\cal H}$ be a real Hilbert space, $z\in {\cal H}$, $f\in\Gamma({\cal H})$ and
$h:{\cal H}\rightarrow \R$ a convex and differentiable function with a $\mu$-Lipschitzian gradient for $\mu>0$. Let $m$ be a strictly positive integer and for any $i\in\{1,\!...,m\}$ let ${\cal G}_i$  be a real Hilbert space, $r_i\in {\cal G}_i$, $g_i, l_i \in\Gamma({\cal G}_i)$ such that $l_i$ is $\nu_i^{-1}$-strongly convex for $\nu_i > 0$ and $L_i:{\cal H}\rightarrow$ ${\cal G}_i$ a nonzero linear continuous operator. Consider the convex optimization problem
\begin{equation}\label{sum-k-prim-f2}
\inf_{x\in {\cal H}}\left\{f(x)+\sum_{i=1}^{m}(g_i \Box l_i)(L_ix-r_i)+h(x)-\langle x,z\rangle\right\}
\end{equation}
and its \textit{Fenchel-type dual} problem
\begin{equation}\label{sum-k-dual-f2}
\sup_{v_i\in {\cal{G}}_i,\, i=1,\!...,m}\left\{-\big(f^*\Box h^*\big)\left(z-\sum_{i=1}^{m}L_i^*v_i\right)-\sum_{i=1}^{m}\big(g_i^*(v_i)+l_i^*(v_i) + \langle v_i,r_i\rangle\big) \right\}.
\end{equation}
\end{problem}

Considering the maximal monotone operators
$$A=\partial f, C=\nabla h, B_i=\partial g_i \ \mbox{and} \ D_i=\partial l_i,\ i=1,\!...,m,$$
according to \cite[Proposition 17.10, Theorem 18.15]{bauschke-book}, $D_i^{-1} = \nabla l_i^*$ is a monotone and $\nu_i$-Lipschitzian operator for $i=1,\!...,m$.
The monotone inclusion problem \eqref{sum-k-primal-C-D} reads
\begin{equation}\label{sum-k-primal-C-D-f}
\mbox{find } \ol x \in {\cal H} \ \mbox{such that} \ z\in \partial f(\ol x)+ \sum_{i=1}^{m}L_i^*((\partial g_i \Box \partial l_i) (L_i \ol x-r_i))+ \nabla h(\ol x),
\end{equation}
while the dual inclusion problem \eqref{sum-k-dual-C-D} reads
\begin{equation}\label{sum-k-dual-C-D-f}
\mbox{ find } \ol v_1 \in {\cal G}_1,\!...,\ol v_m \in {\cal G}_m \ \mbox{such that } \exists x\in {\cal H}: \
\left\{
\begin{array}{ll}
z-\sum_{i=1}^{m}L_i^*\ol v_i\in \partial f(x)+\nabla h(x)\\
\ol v_i\in (\partial g_i \Box \partial l_i) (L_ix-r_i),\ i=1,\!...,m.
\end{array}\right.
\end{equation}

If $(\ol x, \ol v_1,\!...,\ol v_m)\in{\cal H} \times$ ${\cal{G}}_1 \times...\times {\cal{G}}_m$ is a primal-dual solution to \eqref{sum-k-primal-C-D-f}-\eqref{sum-k-dual-C-D-f}, namely,
\begin{equation}\label{prim-dual-f2}z-\sum_{i=1}^{m}L_i^*\ol v_i\in \partial f(\ol x)+\nabla h(\ol x) \mbox{ and }\ol v_i\in (\partial g_i \Box \partial l_i)(L_i\ol x-r_i),\ i=1,\!...,m,\end{equation}
then $\ol x$ is an optimal solution of the problem \eqref{sum-k-prim-f2}, $(\ol v_1,\!...,\ol v_m)$ is an optimal solution of \eqref{sum-k-dual-f2} and the optimal objective values of the two problems coincide. 
Notice that \eqref{prim-dual-f2} is nothing else than the system of optimality conditions for the primal-dual pair of convex optimization problems \eqref{sum-k-prim-f2}-\eqref{sum-k-dual-f2}.

In case a regularity condition is fulfilled, the optimality conditions \eqref{prim-dual-f2} are also necessary. More precisely,
if the primal problem \eqref{sum-k-prim-f2} has an optimal solution $\ol x$ and a suitable regularity condition is
fulfilled, then there exists $(\ol v_1,\!...,\ol v_m)$ an optimal solution to \eqref{sum-k-dual-f2} such that $(\ol x, \ol v_1,\!...,\ol v_m)$ satisfies the optimality conditions \eqref{prim-dual-f2}.

For the readers convenience, we discuss some regularity conditions which are suitable in this context. One of the weakest qualification conditions of interiority-type reads 
(see, for instance, \cite[Proposition 4.3, Remark 4.4]{combettes-pesquet})
\begin{equation}\label{reg-cond} (r_1,\!...,r_m)\in\sqri\left(\prod_{i=1}^{m}(\dom g_i + \dom l_i)-\{(L_1x,\!...,L_mx):x\in \dom f\}\right).
\end{equation}
Here, for ${\cal H}$ a real Hilbert space and $S\subseteq {\cal H}$ a convex set,  we denote by
$$\sqri S:=\{x\in S:\cup_{\lambda>0}\lambda(S-x) \ \mbox{is a closed linear subspace of} \ {\cal H}\}$$
its \textit{strong quasi-relative interior}. Notice that we always have $\inte S\subseteq\sqri S$ (in general this inclusion may be strict). If ${\cal H}$ is finite-dimensional, then $\sqri S$ coincides with $\ri S$, the relative interior of $S$, which is the interior of $S$ with respect to its affine hull.
The condition \eqref{reg-cond} is fulfilled, if: (i) for all $i=1,\!...,m$, $\dom g_i={\cal{G}}_i$ or $\dom h_i={\cal{G}}_i$, or (ii) ${\cal H}$ and ${\cal{G}}_i$ are finite-dimensional spaces
and there exists $x\in\ri\dom f$ such that $L_ix-r_i\in\ri\dom g_i+\ri\dom l_i$, $i=1,\!...,m$ (see \cite[Proposition 4.3]{combettes-pesquet}).
For other regularity conditions we refer the reader to consult \cite{b-hab, bot-csetnek, bauschke-book, bo-van, Zal-carte}.

The following statement is a particular instance of Theorem \ref{inertial-primal-dual}.

\begin{theorem}\label{inertial-primal-dual-f} 
Suppose that the primal optimization problem \eqref{sum-k-prim-f2} has an optimal solution and the regularity condition \eqref{reg-cond} is fulfilled. Chose $x_0,x_1\in\cal{H}$ and $v_{i,0},v_{i,1}\in$ ${\cal G}_i$,
$i=1,...,m,$ and set
$$(\forall n\geq 1)\hspace{0.2cm}\left\{
\begin{array}{rl}
p_{1,n}  = & \prox_{\lambda_n f}[x_n-\lambda_n(\nabla f(x_n)+\sum_{i=1}^mL_i^*v_{i,n}-z)+\alpha_{1,n}(x_n-x_{n-1})]\\
p_{2,i,n}  = & \prox_{\lambda_n g_i^*}[v_{i,n}+\lambda_n(L_ix_n-\nabla l_i^*(v_{i,n})-r_i)+\alpha_{1,n}(v_{i,n}-v_{i,n-1})],\\ 
          & i=1,...,m\\
v_{i,n+1} = & \lambda_nL_i(p_{1,n}-x_n)+\lambda_n(\nabla l_i^*(v_{i,n})-\nabla l_i^*(p_{2,i,n}))+p_{2,i,n}\\
         &+\alpha_{2,n}(v_{i,n}-v_{i,n-1}), i=1,...,m\\
x_{n+1} = & \lambda_n\sum_{i=1}^mL_i^*(v_{i,n}-p_{2,i,n})+\lambda_n(\nabla h(x_n)-\nabla h(p_{1,n}))+p_{1,n}\\
          & +\alpha_{2,n}(x_n-x_{n-1}).
\end{array}\right.$$
Consider $\lambda, \sigma>0$ and $\alpha_1\geq 0, \alpha_2\geq 0$ such that 
\begin{equation*}
12\alpha_2^2+9(\alpha_1+\alpha_2)+4\sigma<1 \ \mbox{and} \ \lambda\leq\lambda_n\leq \frac{1}{\beta}\sqrt{\frac{1-12\alpha_2^2-9(\alpha_1+\alpha_2)-4\sigma}{12\alpha_2^2+8(\alpha_1+\alpha_2)+4\sigma +2}} \ \forall n \geq 1,
\end{equation*}
where $$\beta=\max\{\mu,\nu_1,...,\nu_m\}+\sqrt{\sum_{i=1}^m\|L_i\|^2},$$
and for $i=1,2$ the nondecreasing sequences $(\alpha_{i,n})_{n \geq 1}$ with $\alpha_{i,1}=0$ and $0\leq\alpha_{i,n}\leq\alpha_i$ for all $n \geq 1$. 
Then the following statements are true:  
\begin{itemize} \item[(a)] $\sum_{n\in\N}\|x_{n+1}-x_n\|^2<+\infty$,
$\sum_{n \geq 1}\|x_n-p_{1,n}\|^2< +\infty$ and, for $i=1,...,m$, \newline $\sum_{n\in\N}\|v_{i,n+1}-v_{i,n}\|^2$ $< + \infty$ and $\sum_{n \geq 1}\|v_{i,n}-p_{2,i,n}\|^2<+\infty$;
\item[(b)] There exists $(\ol x, \ol v_1,\!...,\ol v_m)\in{\cal H} \times$ ${\cal G}_1 \times...\times {\cal G}_m$
satisfying the optimality conditions \eqref{prim-dual-f2}, hence $\ol x$ is an optimal solution of the problem \eqref{sum-k-prim-f2},
$(\ol v_1,\!...,\ol v_m)$ is an optimal solution of \eqref{sum-k-dual-f2} and the optimal objective values of the two problems coincide, such that the following hold: 
\begin{itemize}\item[(i)] $x_n\rightharpoonup\ol x$, $p_{1,n}\rightharpoonup\ol x$ and, for $i=1,...,m$, $v_{i,n}\rightharpoonup\ol v_i$ and
$p_{2,i,n}\rightharpoonup\ol v_i$ as $n \rightarrow +\infty$;  
\item[(ii)] If $f+h$ is uniformly convex, then $x_n\rightarrow\ol x$ and
$p_{1,n}\rightarrow\ol x$ as $n \rightarrow +\infty$;
\item[(iii)] If $g_i^*+l_i^*$ is uniformly convex for some $i\in\{1,...,m\}$, then $v_{i,n}\rightarrow\ol v_i$ and $p_{2,i,n}\rightarrow\ol v_i$ as $n \rightarrow +\infty$.
\end{itemize}
\end{itemize}
\end{theorem}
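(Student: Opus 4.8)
The plan is to read off Theorem \ref{inertial-primal-dual-f} as the specialization of Theorem \ref{inertial-primal-dual} corresponding to the choice $A=\partial f$, $C=\nabla h$, $B_i=\partial g_i$ and $D_i=\partial l_i$ for $i=1,\ldots,m$. First I would verify that these operators meet the hypotheses of Problem \ref{pr1}: since $f\in\Gamma(\mathcal{H})$ and $g_i\in\Gamma(\mathcal{G}_i)$, both $\partial f$ and $\partial g_i$ are maximally monotone; $\nabla h$ is monotone by convexity of $h$ and $\mu$-Lipschitzian by hypothesis; and $D_i^{-1}=(\partial l_i)^{-1}=\partial l_i^*=\nabla l_i^*$ is monotone and $\nu_i$-Lipschitzian, exactly as recorded in the discussion preceding the theorem from the $\nu_i^{-1}$-strong convexity of $l_i$. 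Using $J_{\gamma\partial f}=\prox_{\gamma f}$ together with $(\partial g_i)^{-1}=\partial g_i^*$, hence $J_{\lambda_n B_i^{-1}}=J_{\lambda_n\partial g_i^*}=\prox_{\lambda_n g_i^*}$, I would check that the backward steps of Theorem \ref{inertial-primal-dual} turn into the proximal steps $\prox_{\lambda_n f}$ and $\prox_{\lambda_n g_i^*}$, so that the present iterative scheme coincides verbatim with the one there.

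The only substantive point is the range condition \eqref{ran}. I would argue that the assumed existence of an optimal solution to \eqref{sum-k-prim-f2} together with the regularity condition \eqref{reg-cond} yields, through the duality discussion of this section, a tuple $(\ol x,\ol v_1,\ldots,\ol v_m)$ satisfying the optimality system \eqref{prim-dual-f2}. For the present operators \eqref{prim-dual-f2} is precisely \eqref{prim-dual-C-D}, so this tuple is a primal-dual solution to Problem \ref{pr1}, i.e.\ a point of $\zer(\fM+\fQ)$; by the equivalence $\eqref{ran}\Leftrightarrow\zer(\fM+\fQ)\neq\emptyset$ noted in the proof of Theorem \ref{inertial-primal-dual}, condition \eqref{ran} holds. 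With all hypotheses verified, parts (a) and (b)(i) follow at once from the corresponding parts of Theorem \ref{inertial-primal-dual}, while the interpretation of the limit $(\ol x,\ol v_1,\ldots,\ol v_m)$ as a pair of optimal solutions with equal optimal values is read off from \eqref{prim-dual-f2}.

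It then remains to convert the uniform monotonicity hypotheses of Theorem \ref{inertial-primal-dual} into uniform convexity. For (b)(ii) I would use the fact recalled in this section that uniform convexity of $f+h$ makes $\partial(f+h)$ uniformly monotone; since $h$ is finite-valued and differentiable, the subdifferential sum rule gives $\partial(f+h)=\partial f+\nabla h=A+C$, so $A+C$ is uniformly monotone at $\ol x$ and Theorem \ref{inertial-primal-dual}(b)(ii) applies. For (b)(iii), uniform convexity of $g_i^*+l_i^*$ makes $\partial(g_i^*+l_i^*)$ uniformly monotone; combined with the always-valid inclusion $\partial g_i^*+\partial l_i^*\subseteq\partial(g_i^*+l_i^*)$ and the identity $B_i^{-1}+D_i^{-1}=\partial g_i^*+\partial l_i^*$, the operator $B_i^{-1}+D_i^{-1}$ inherits uniform monotonicity at $\ol v_i$, and Theorem \ref{inertial-primal-dual}(b)(iii) delivers the strong convergence. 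I expect the only delicate step to be this last transfer of uniform monotonicity: it works because the defining inequality is required over the graph, and passing to the smaller operator only shrinks that graph, so equality in the sum rule is not needed. Everything else is a direct substitution into Theorem \ref{inertial-primal-dual}.
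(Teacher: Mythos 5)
Your proposal is correct and follows exactly the route the paper intends: the paper presents this theorem as "a particular instance of Theorem \ref{inertial-primal-dual}", and the discussion preceding it carries out precisely your identifications ($A=\partial f$, $C=\nabla h$, $B_i=\partial g_i$, $D_i=\partial l_i$ with $D_i^{-1}=\nabla l_i^*$ being $\nu_i$-Lipschitzian, $J_{\gamma\partial f}=\prox_{\gamma f}$, $(\partial g_i)^{-1}=\partial g_i^*$), derives the range condition from the existence of a primal solution plus \eqref{reg-cond} via the optimality system \eqref{prim-dual-f2}, and invokes the cited fact that uniform convexity yields uniform monotonicity of the subdifferential, with your observation about restricting to the sub-operator $\partial g_i^*+\partial l_i^*\subseteq\partial(g_i^*+l_i^*)$ being the right way to justify (b)(iii). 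No gaps; the only minor point worth flagging is that the $\nabla f(x_n)$ appearing in the first line of the stated iterative scheme should read $\nabla h(x_n)$ (a typo in the statement, since $C=\nabla h$ and $f$ is not assumed differentiable), which your "verbatim" matching implicitly corrects.
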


\begin{remark} Suppose that the primal optimization problem \eqref{sum-k-prim-f2} is feasible, which means that its optimal objective
value is not identical $+\infty$. The existence of optimal solutions of \eqref{sum-k-prim-f2} is guaranteed if
for instance, $f+h+\langle\cdot,-z\rangle$ is coercive (that is $\lim_{\|x\|\rightarrow\infty}(f+h+\langle\cdot,-z\rangle)(x)=+\infty$)
and for all $i=1.,,,.,m$, $g_i$ is bounded from below. Indeed, under these circumstances, the objective function of
\eqref{sum-k-prim-f2} is coercive (one can use \cite[Corollary 11.16 and Proposition 12.14]{bauschke-book} to show that $g_i\Box l_i$ is bounded from below and $g_i\Box l_i\in\Gamma({\cal{G}}_i)$ for $i=1,...,m$) 
and the statement follows via \cite[Corollary 11.15]{bauschke-book}. On the other hand, when $f+h$ is strongly convex, then the objective function of
\eqref{sum-k-prim-f2} is strongly convex, too, thus \eqref{sum-k-prim-f2} has a unique optimal solution (see \cite[Corollary 11.16]{bauschke-book}).
\end{remark}

\begin{remark} Let us mention that for $i\in\{1,...,m\}$ the function $g_i^*+l_i^*$ is uniformly convex, if $g_i^* + l_i^*$ is $\delta_i$-strongly convex for $\delta_i > 0$.
This is the case, for example, when $g_i^*$ (or $l_i^*$) is $\delta_i$-strongly convex or when $g_i^*$ is $\alpha_i$-strongly convex and
$l_i^*$ is $\beta_i$-strongly convex, where $\alpha_i,\beta_i>0$ are such that $\alpha_i+\beta_i\geq \delta_i$. Let us also notice that, according to
\cite[Theorem 18.15]{bauschke-book}, $g_i^*$ is $\alpha_i$-strongly convex if and only if $g_i$ is Fr\'{e}chet-differentiable  and
$\nabla g_i$ is $\alpha^{-1}_i$-Lipschitzian.
\end{remark}

\end{document}